\tikzstyle{square} = [shape=regular polygon, regular polygon sides=4, minimum size=1cm, draw, inner sep=0, anchor=south, fill=gray!30]
\tikzstyle{squared} = [shape=regular polygon, regular polygon sides=4, minimum size=1cm, draw, inner sep=0, anchor=south, fill=gray!60]
\newtheorem{theorem}{Theorem}[section]
\newtheorem{definition}[theorem]{Definition}
\newtheorem{lemma}[theorem]{Lemma}
\newtheorem{coro}[theorem]{Corollary}
\newtheorem{prop}[theorem]{Proposition}
\newcommand{\C}{{\mathbb{C}}}
\newcommand{\Z}{{\mathbb{Z}}}
\newcommand{\N}{{\mathbb{N}}}
\newcommand{\Nm}{{\mathrm{Nm}}}
\renewcommand{\Re}{{\text{Re}}}
\renewcommand{\Im}{{\text{Im}}}
\renewcommand{\Re}{{\text{Re}}}
\renewcommand{\Im}{{\text{Im}}}
\begin{document}

\title{A Division Algorithm for the Gaussian Integers' Minimal Euclidean Function}
\author{Hester Graves}
\affil{IDA/Center for Computing Sciences}
\date{\today}

\maketitle

\begin{abstract}The usual division algorithms on $\Z$ and $\Z[i]$ measure the size of remainders using the norm function.
These rings are Euclidean with respect to several functions.
The pointwise minimum of all Euclidean functions $f: R \setminus 0 \rightarrow \N$  on a Euclidean domain $R$ is itself a Euclidean function, 
called the minimal Euclidean function and denoted by $\phi_R$.
The integers, $\Z$, and the Gaussians, $\Z[i]$, are the only rings of integers of number fields for which we have a formula to compute their minimal Euclidean functions, $\phi_{\Z}$ and $\phi_{\Z[i]}$.
This paper presents the first division algorithm for $\Z[i]$ relative to  $\phi_{\Z[i]}$, 
empowering readers to perform the Euclidean algorithm on $\Z[i]$ using its minimal Euclidean function.\\
\textbf{MSC:}11A05, 11A63, 11R04, 11R11, 11R99
\end{abstract}

\section{Introduction}

We call the domain $\Z[i] = \{x +yi: x, y \in \Z, i^2 = -1 \}$ the Gaussian integers because %their namesake 
Gauss showed they are Euclidean for the (algebraic) norm,
 $\Nm(x +yi) = x^2 +y^2$.
He discovered a division algorithm that, given $a, b \in \Z[i] \setminus \{0\}$, provides $q, r \in \Z[i]$ such that $a =qb +r$ and $\Nm(r) < \Nm(b)$.
Accordingly, we call $q$ the Gauss quotient and $r$ the Gauss remainder.

Inspired by Zariski, Motzkin \cite{Motzkin} broadened the study of Euclidean domains via Euclidean functions.
A domain $R$ is Euclidean if there exists a \textbf{Euclidean function} $f: R \setminus \{ 0 \} \rightarrow W$, 
where $W$ is a well-ordered set with $\N$ as an initial segment, such that for all $a, b \in R \setminus \{0\}$,
there exist some $q, r \in R$ such that $a = qb +r$, where either $f(r) < f(b)$ or $r =0$.
Using this modern terminology, Gauss showed the norm is a Euclidean function for $\Z[i]$.

Motzkin \cite{Motzkin} further showed that if $F$ is the set of all Euclidean functions on $R \setminus \{0\}$, then $\phi_R(x) = \min_{f \in F} f(x)$
is itself a Euclidean function.
For obvious reasons, we call $\phi_R$ the \textbf{minimal Euclidean function} on $R$.  
In particular, he showed $\phi_{\Z}(x) = \lfloor \log_2 |x| \rfloor $.
Until 2023, this was the only formula to compute the minimal Euclidean function for any number field.

Just as every integer has a binary expansion, every Gaussian integer has $(1+i)$-ary expansions.
This is fitting, as $2 =-i(1+i)^2$ and the quotient $\Z[i]/ \langle 1+i \rangle$ has size $2$.
We use $(1+i)$-expansions of the form $x +yi = \sum_{j=0}^n u_j (1+i)^j$ for some $n$, where $u_j \in \{\pm 1, \pm i, 0\}$ and $u_n \neq 0$.
These expressions are not unique, as 
\begin{equation}\label{2+i}
 2 +i = -i(1+i)^2 +i = (1+i) +1.
\end{equation}
Lenstra (\cite{Lenstra}, section 11) showed that $\phi_{\Z[i]}(x+yi)$ is the minimal degree of all $(1+i)$-ary expansions of $x+yi$, e.g., 
%$\phi_{\Z[i]}(2 +i) =\text{deg}_{1+i} ((1+i) +1) =1$.
%Therefore, if $z \in \Z[i] \setminus \{0\}$, then $\phi_{\Z[i]}((1+i)^n x) = \phi_{\Z[i]}(x)  +n$  and  
%$\phi_{\Z[i]}(ux) = \phi_{\Z[i]}(x)$ for all $u$ in $\Z[i]$'s group of multiplicative units, $\Z[i]^{\times} = \{ \pm 1, \pm i\}$.
%He did not, however, provide methods to calculate $\phi_{\Z[i]}$ for a generic $x+yi \in \Z[i]\setminus \{0\}$;
%Equation \ref{2+i} shows computing $\phi_{\Z[i]}$ is not straightforward.

The author's recent research gives an explicit formula for $\phi_{\Z[i]}(x+yi)$, using valuations and the sequence $\{w_m\}$ \cite{Graves}.
If $a$ divides $b$, we write $a|b$.
Otherwise, $a \nmid b$.
When $a^c |b$ but $a^{c+1} \nmid b$, we say $c$ is the $a$-valuation of $b$, or $v_a(b) =c$.
We define the sequence 
\[ w_m = \begin{cases} 3 \cdot 2^k & \text{ if } m = 2k \\ 4 \cdot 2^k & \text{ if } m = 2k+1 \end{cases}, \]
so that $w_0 = 3$, $w_1 =4$, $w_2 = 6$, $w_3 = 8$, etc.  
We rely on the facts that $w_{m+2} = 2 w_m$ and that if $2^l < w_n$, then $2^l |w_m$ for all $m >n$.
We use $\gcd(x,y)$ to denote the greatest common divisor of $x$ and $y$.

\begin{theorem}(Graves, \cite{Graves})\label{formula}
Given $x +yi \in \Z[i] \setminus \{0\}$, let $j = v_2(\gcd(x,y))$ and $n$ be the smallest integer such that 
$\frac{|x|}{2^j}, \frac{|y|}{2^j} \leq w_n -2$.
If $\frac{|x| + |y|}{2^j} \leq w_{n+1} -3$, then $\phi_{Z[i]}(x+yi) = n+2j$.
Otherwise, $\phi_{\Z[i]}(x+yi) = n+2j+1$.
\end{theorem}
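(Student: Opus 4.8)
My plan is to reduce Theorem~\ref{formula} to a statement about $(1+i)$-ary expansions via Lenstra's characterization ($\phi_{\Z[i]}(z)$ is the least degree of a $(1+i)$-ary expansion of $z$) and then argue by induction. For $z=x+yi\in\Z[i]\setminus\{0\}$ set $M(z)=\max(|x|,|y|)$ and $L(z)=|x|+|y|$, and let $T_m$ be the set of Gaussian integers admitting an expansion $\sum_{k=0}^{m}u_k(1+i)^k$ with $u_k\in\{0,\pm1,\pm i\}$, so that $\phi_{\Z[i]}(z)=\min\{m: z\in T_m\}$. I would first isolate the role of $2$: if $2\mid\gcd(x,y)$ then $z\equiv0\pmod{1+i}$ forces $u_0=0$, and $z/(1+i)$ is again $\equiv0\pmod{1+i}$ (its two coordinates sum to $y$), forcing $u_1=0$; since $z/(1+i)^2=-iz/2$ is a unit times $z/2$, this gives $\phi_{\Z[i]}(z)=\phi_{\Z[i]}(z/2)+2$, hence $\phi_{\Z[i]}(z)=\phi_{\Z[i]}(z/2^{j})+2j$ for $j=v_2(\gcd(x,y))$. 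Because $w_m=2^{j}w_{m-2j}$, everything then reduces to the case $\gcd(x,y)$ odd, where I claim the clean equivalent form: $z\in T_m\iff M(z)\le w_m-2$ and $L(z)\le w_{m+1}-3$. (A line of bookkeeping using $L(z)\le 2M(z)$ shows this is the same as the ``smallest $n$'' statement of the theorem, since then $\phi_{\Z[i]}(z)$ is the least $m$ with both bounds, and the two thresholds differ by at most one.)

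The easy direction, $z\in T_m\Rightarrow$ the two bounds, is the triangle inequality: multiplication by a unit is an isometry of both the $\ell^\infty$ and the $\ell^1$ norm on $\R^2$, so $M(z)\le\sum_{k=0}^m\|(1+i)^k\|_\infty$ and $L(z)\le\sum_{k=0}^m\|(1+i)^k\|_1$; since $\|(1+i)^k\|_\infty=2^{\lfloor k/2\rfloor}$ and $\|(1+i)^k\|_1=2^{\lceil k/2\rceil}$, these partial sums evaluate exactly to $w_m-2$ and $w_{m+1}-3$. Combined with the reduction above this also yields the general-$j$ bounds $M(z)\le w_m-2^{j+1}$ and $L(z)\le w_{m+1}-3\cdot2^{j}$.

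The converse — from the bounds, build an expansion of degree $\le m$ — is the heart of the argument and is essentially the division algorithm. I would induct on $m$ (the case $m=0$ is immediate). Assume $\gcd(x,y)$ odd, $M(z)\le w_m-2$, $L(z)\le w_{m+1}-3$, $m\ge1$; I choose $u_0$ and check $z_1:=(z-u_0)/(1+i)\in T_{m-1}$, which by the inductive hypothesis amounts to verifying the $(m-1)$-bounds for $z_1$. If $x\equiv y\pmod2$ (both odd) then $u_0=0$ is forced; $z_1$ again has odd $\gcd$, and the crucial point is that on elements divisible by $1+i$ the map $z\mapsto z/(1+i)$ sends $(M,L)$ to $(L/2,M)$, so $M(z_1)\le w_{m-1}-2$ and $L(z_1)\le w_m-3$ follow from the $z$-bounds after a parity gain ($L(z)$ is even but $w_{m+1}-3$ is odd, and $M(z)$ is odd but $w_m-2$ is even). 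If $x\not\equiv y\pmod2$ then all four units are admissible for $u_0$, and the four resulting $z_1$ are exactly the four lattice points nearest $z/(1+i)$; after using the $8$ symmetries generated by multiplication by $i$ and conjugation (which preserve $T_m$, $M$, $L$, and $\gcd$) to assume $x>y\ge0$, I would take $u_0=1$ whenever that keeps $\gcd(z_1)$ odd — then the bounds drop out directly from $L(z)\le w_{m+1}-3=2w_{m-1}-3$ and $M(z)\le w_m-2$ — and otherwise, which occurs precisely when $x+y\equiv x-y\equiv1\pmod4$, take $u_0=i$ instead, for which $\gcd(z_1)$ is again odd and the bounds follow by the same kind of parity argument (with a separate, small check in the degenerate case $y=0$).

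I expect this last case to be the main obstacle: one must pick $u_0$ so that $z_1$ is simultaneously small in $M$, small in $L$, and not so divisible by $2$ that its own budget collapses, and then verify the inequalities in each residue class of $(x+y,x-y)$ modulo $4$. The supporting arithmetic with $\{w_m\}$ — repeated use of $w_{m+2}=2w_m$, the parity of $w_m$ for $m\ge1$, and inequalities such as $w_m\le2w_{m-1}-2$ — is where the real labour lies; once that is in place, the induction and the equivalence with the displayed formula are formal.
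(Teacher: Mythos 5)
Theorem~\ref{formula} is not proved in this paper at all: it is imported verbatim from the author's earlier work \cite{Graves}, so there is no internal proof to compare your argument against. Judged on its own merits, your outline is sound and is essentially a reconstruction of the strategy of the cited reference: use Lenstra's characterization of $\phi_{\Z[i]}$ as the least degree of a $(1+i)$-ary expansion, strip off the forced zero digits to reduce to $\gcd(x,y)$ odd via $\phi_{\Z[i]}(z)=\phi_{\Z[i]}(z/2)+2$, prove necessity of the two bounds by summing $\|(1+i)^k\|_\infty=2^{\lfloor k/2\rfloor}$ and $\|(1+i)^k\|_1=2^{\lceil k/2\rceil}$ (these partial sums do evaluate to $w_m-2$ and $w_{m+1}-3$), and prove sufficiency by a greedy digit choice and induction. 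I checked the load-bearing steps: the map $z\mapsto z/(1+i)$ on $(1+i)$-divisible $z$ does send $(\ell_\infty,\ell_1)$ to $(\ell_1/2,\ell_\infty)$; in the odd--odd case the parity gains ($\ell_1$ even versus $w_{m+1}-3$ odd, $\ell_\infty$ odd versus $w_m-2$ even) close the induction; in the mixed-parity case with $x>y\ge0$ the choice $u_0=1$ gives $(\ell_\infty,\ell_1)(z_1)=((\ell_1(z)-1)/2,\,\ell_\infty(z)-1)$ and preserves odd gcd exactly outside the residue class $x+y\equiv x-y\equiv1\pmod 4$, where $u_0=i$ gives $\ell_1(z_1)=\ell_\infty(z)$, odd, hence $\le w_m-3$; and the two thresholds do differ by at most one because $\ell_1\le2\ell_\infty$ yields $\ell_1(z)\le w_{n+2}-4$. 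The only places where real work remains are the ones you already flag: the full mod-$4$ case check, and the degenerate case $y=0$, $x\equiv1\pmod4$, where $\ell_\infty(z_1)=(x+1)/2$ and the required inequality $x\le w_{m+1}-5$ follows from $x\le w_m-2$ only for $m\ge3$, the small cases $m\le2$ forcing $x=1$ and being handled directly. So this is a correct proof skeleton whose completion is routine but genuinely laborious arithmetic with the sequence $\{w_m\}$, not a gap in the ideas.
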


%This paper gives a division algorithm for $\phi_{\Z[i]}$.
As a consequence, if $x$ and $y$ are $N$-bit integers, we can compute $\phi_{\Z[i]}(x+yi)$ in time $O(\log N)$. 
In fact, section 4 of \cite{Graves} shows how to compute minimal $(1+i)$-ary expansions in time $O(\log N)$.

Stating the theorem's corollary below requires some more terminology.
Every complex number has a real and imaginary part, which we denote by $\Re(x+yi) = x$ and $\Im(x +yi) = y$.
The $\ell_1$-norm is $\ell_1(x +yi) = |x| + |y|$ and the $\ell_{\infty}$-norm is $\ell_{\infty}(x +yi) = \max (|x|, |y|)$.
For ease of notation, we use the following definitions, including the function 
\[ m(x+yi) = \ell_1(x +yi) - \ell_{\infty}(x +yi)  = \min (|x|, |y|).\]
\begin{definition}
If $z \in \C \setminus \{0\}$ and $\ell_{\infty}(z) \neq m(z)$, there exists a unique unit $u_{z}$ 
such that $\Re(u_{z} z) = \ell_{\infty}(z)$.
If $z \in  \C \setminus \{0 \}$ and $\ell_{\infty} (z) = m(z)$, there exists a unique unit $u_{z}$ 
such that $u_{z} z = \ell_{\infty} (z) (1 +i)$.
\end{definition}
For example, $u_{-1 +2i} = -i$ and $u_{1-i} = i$. % because $-i(-1 +2i) = 2 +i$ and $(1-i)i = 1+i$.

\begin{coro}\label{coro_fcn_bounds}(of Theorem \ref{formula}) If $z \in \Z[i] \setminus \{0\}$, 
if $\ell_{\infty}(z) \leq w_n - 2^{v_2(z) +1}$,
and if $\ell_1(z) \leq w_{n+1} - 3 \cdot 2^{v_2(x)}$, then $\phi_{\Z[i]}(z) \leq n$.
Note $\phi_{\Z[i]}(z) > n-1$ if and only if either $\ell_{\infty}(z) > w_{n-1} - 2^{v_2(z) +1}$ or 
$\ell_1(z) > w_n - 3 \cdot 2^{v_2(z)}.$
We also note that if $\phi_{\Z[i]}(z) \leq n$, then $m(z) \leq w_{n-1} - 2^{v_2(z) +1}$.
\end{coro}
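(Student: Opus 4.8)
The plan is to reduce all three assertions to Theorem~\ref{formula} after passing to normalized coordinates. Write $z=x+yi$, put $j:=v_2(z)=v_2(\gcd(x,y))$, and set $X:=|x|/2^j$, $Y:=|y|/2^j$, so that $\ell_{\infty}(z)=2^j\max(X,Y)$, $\ell_1(z)=2^j(X+Y)$, $m(z)=2^j\min(X,Y)$, and $\max(X,Y)\ge 1$. Iterating $w_{k+2}=2w_k$ (and using the convention $w_{-1}:=2$, which is consistent with it) gives $w_k=2^j w_{k-2j}$ for $k\ge 2j-1$; hence $\ell_{\infty}(z)\le w_n-2^{j+1}$ is equivalent to $\max(X,Y)\le w_{n-2j}-2$, and $\ell_1(z)\le w_{n+1}-3\cdot 2^j$ is equivalent to $X+Y\le w_{n+1-2j}-3$. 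Note that each of these forces $n\ge 2j$, since then $w_n\ge 2^{j+1}+2^j\max(X,Y)\ge 3\cdot 2^j=w_{2j}$ and $(w_k)$ is strictly increasing. Finally, if $n_0$ denotes the least index with $\max(X,Y)\le w_{n_0}-2$ and $\epsilon\in\{0,1\}$ is $0$ precisely when $X+Y\le w_{n_0+1}-3$, then Theorem~\ref{formula} reads $\phi_{\Z[i]}(z)=n_0+2j+\epsilon$.

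With this dictionary in hand, I would prove the single biconditional
\[
\phi_{\Z[i]}(z)\le n \iff \ell_{\infty}(z)\le w_n-2^{j+1}\ \text{ and }\ \ell_1(z)\le w_{n+1}-3\cdot 2^j ,
\]
after which the first assertion of the corollary is its $(\Leftarrow)$ half and the second assertion is the negation of this biconditional with $n$ replaced by $n-1$. For $(\Leftarrow)$: the first hypothesis gives $n_0\le n-2j$; if $n_0<n-2j$ then $\phi_{\Z[i]}(z)=n_0+2j+\epsilon\le n$, while if $n_0=n-2j$ then the second hypothesis is exactly the condition $X+Y\le w_{n_0+1}-3$, so $\epsilon=0$ and $\phi_{\Z[i]}(z)=n$. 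For $(\Rightarrow)$: from $n_0+2j+\epsilon\le n$ we get $n_0\le n-2j$, hence $\max(X,Y)\le w_{n_0}-2\le w_{n-2j}-2$; for the $\ell_1$ bound, if $\epsilon=0$ then $X+Y\le w_{n_0+1}-3\le w_{n+1-2j}-3$, and if $\epsilon=1$ then $n_0\le n-2j-1$, so $X+Y\le 2\max(X,Y)\le 2(w_{n_0}-2)=w_{n_0+2}-4\le w_{n+1-2j}-3$.

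For the third assertion, assume $\phi_{\Z[i]}(z)\le n$; then $n\ge \phi_{\Z[i]}(z)\ge 2j$, and the $(\Rightarrow)$ direction above yields $X+Y\le w_{n+1-2j}-3=2w_{n-1-2j}-3$. Were $\min(X,Y)\ge w_{n-1-2j}-1$, we would get $X+Y\ge 2\min(X,Y)\ge 2w_{n-1-2j}-2 > 2w_{n-1-2j}-3\ge X+Y$, a contradiction; hence $\min(X,Y)\le w_{n-1-2j}-2$, i.e. $m(z)=2^j\min(X,Y)\le 2^j w_{n-1-2j}-2^{j+1}=w_{n-1}-2^{j+1}$.

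The only step that is not pure bookkeeping is the $\epsilon=1$ case of $(\Rightarrow)$: there Theorem~\ref{formula} supplies only the \emph{lower} bound $X+Y>w_{n_0+1}-3$, so one instead bounds $X+Y$ from above via $2\max(X,Y)$ and then invokes $2w_{n_0}=w_{n_0+2}$. Everything else is careful index arithmetic, the one point requiring attention being to keep the shifted indices $n-2j$, $n+1-2j$, $n-1-2j$ in range (which is exactly why the hypotheses force $n\ge 2j$ and why I fix $w_{-1}:=2$ to cover the boundary case $n=2j$ of the third assertion).
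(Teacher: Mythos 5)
Your proof is correct. The paper states this corollary without proof, and your argument is exactly the intended derivation: normalize by $2^{j}$ with $j=v_2(z)$, translate the $\ell_\infty$ and $\ell_1$ bounds through the identity $w_k = 2^{j}w_{k-2j}$, and unfold Theorem~\ref{formula} into the biconditional whose two directions give the first two assertions and whose $\ell_1$ bound gives the third; your handling of the $\epsilon=1$ case via $X+Y\le 2\max(X,Y)=w_{n_0+2}-4$ and of the boundary index via $w_{-1}=2$ is sound.
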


Given $a, b \in \Z[i] \setminus \{0\}$, Theorem \ref{main_result} shows how to adjust $a$ and $b$'s Gauss remainder to find new quotients and remainders
$q, r \in \Z[i]$ such that $a = qb +r$, where either $r =0$ or $\phi_{\Z[i]}(r) < \phi_{\Z[i]}(b)$.
Interestingly, there are times we do not have to compute the Gauss remainder to know whether we need to adapt it for $\phi_{\Z[i]}$. 
We need the following definition to state Theorem \ref{main_result}.

\begin{definition} For $x \in \mathbb{R}$ and $r \in \mathbb{C}$, we denote 
\[
\text{sgn}(x) = \begin{cases} 1 & \text{ if } x >0 \\ 0 & \text{ if } x =0 \\ -1 & \text{ if } x <0 \end{cases}
\hspace{1.5 cm} 
\text{and}
\hspace{ 1.5 cm} 
s(r) = \text{sgn} (\Im(u_r r)).
\]
\end{definition}

\begin{theorem}\label{main_result}  Suppose $a,b \in \Z[i] \setminus \{0\}$ have Gauss quotient $q$ and non-zero Gauss remainder $r$ with 
$\phi_{\Z[i]}(r) \geq \phi_{\Z[i]}(b ) =n$.
If any of the following conditions hold
\begin{enumerate}
\item $\Im(u_b b) \Im (u_r r) \geq 0$,\\
\item %$\Im (u_b b) \Im (u_r r) <0$ and 
$m(r) + m(b) \leq \ell_{\infty}(r)$,\\
\item %$\Im (u_b b) \Im (u_r r) <0$, 
$m(b) < \ell_{\infty}(r) < m(b) + m(r)$ and $\ell_{\infty}(b) - m(r) > w_{n-1} - 2^{v_2(b) +1}$,
\end{enumerate} then 
$a = \left (q + \frac{u_b}{u_r} \right ) b + \left ( r - \frac{u_b}{u_r} b \right )$, with $\phi_{\Z[i]} \left ( r - \frac{u_b}{u_r} b \right ) < \phi_{\Z[i]} (b)$.
Otherwise, 
$a = \left ( q + \frac{iu_b}{s(r) u_r} \right ) + \left ( r - \frac{i u_b}{s(r) u_r} b \right )$, with $\phi_{\Z[i]} \left ( r - \frac{i u_b}{s(r) u_r} b \right ) < \phi_{\Z[i]} (b)$.
\end{theorem}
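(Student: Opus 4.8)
The plan is to pass to the rotated coordinates in which $r$ and $b$ take the form used by Corollary~\ref{coro_fcn_bounds}. Write $u_r r = \ell_\infty(r) + s(r)\,m(r)\,i$ and $u_b b = \ell_\infty(b) + s(b)\,m(b)\,i$ (with the evident modification $u_z z = \ell_\infty(z)(1+i)$ in the diagonal case $\ell_\infty(z) = m(z)$). Since $\ell_1$, $\ell_\infty$, $m$ and $v_2$ are all unchanged under multiplication by a unit, and both proposed corrections $c$ are units, it suffices to control $u_r(r - cb) = u_r r - (u_r c\, u_b^{-1})\,(u_b b)$, where $u_r c\, u_b^{-1} \in \{1,-1,i,-i\}$ merely permutes and signs the two coordinates of $u_b b$: for the first correction $c = u_b/u_r$ it is the identity, so that
\[ u_r\!\left(r - \tfrac{u_b}{u_r}\,b\right) = \bigl(\ell_\infty(r) - \ell_\infty(b)\bigr) + \bigl(s(r)\,m(r) - s(b)\,m(b)\bigr)i , \]
and for the second correction $c = iu_b/(s(r)u_r)$ it is multiplication by $i/s(r) = \pm i$, a $90^\circ$ rotation (this second correction is only used when $s(r) \neq 0$, since $s(r) = 0$ forces $\Im(u_r r) = 0$, i.e.\ condition~1). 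Finally, because the Gauss quotient rounds $a/b$ to a nearest Gaussian integer, the real and imaginary parts of $r/b$ both lie in $[-\tfrac12,\tfrac12]$, so $|r/b| \le 1/\sqrt2 < 1$ and therefore $b \nmid r$; thus $r - cb \neq 0$ and we always land in the strict inequality of the conclusion.

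Next I would extract the arithmetic consequences of the two hypotheses. Rewriting the box condition $r/b \in [-\tfrac12,\tfrac12]^2$ in the rotated coordinates turns it into a short list of quadratic inequalities among $\ell_\infty(r), m(r), \ell_\infty(b), m(b)$; the consequence I want is that $r$ is genuinely smaller than $b$ in the directions that matter, in particular $\ell_\infty(r) \le \ell_\infty(b)$ (indeed $\ell_\infty(r) \le \tfrac12\ell_1(b)$), so that the real part of the vector displayed above is nonpositive with magnitude $\ell_\infty(b) - \ell_\infty(r)$. On the other side, Corollary~\ref{coro_fcn_bounds} applied to $b$ (using $\phi_{\Z[i]}(b) = n$) gives $\ell_\infty(b) \le w_n - 2^{v_2(b)+1}$, $\ell_1(b) \le w_{n+1} - 3\cdot 2^{v_2(b)}$ and $m(b) \le w_{n-1} - 2^{v_2(b)+1}$, together with the alternative $\ell_\infty(b) > w_{n-1} - 2^{v_2(b)+1}$ or $\ell_1(b) > w_n - 3\cdot 2^{v_2(b)}$; applied to $r$ (using $\phi_{\Z[i]}(r) \ge n$) it gives $\ell_\infty(r) > w_{n-1} - 2^{v_2(r)+1}$ or $\ell_1(r) > w_n - 3\cdot 2^{v_2(r)}$. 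In every case the goal is then the same: verify the two hypotheses of Corollary~\ref{coro_fcn_bounds} for $r' := r - cb$ with $n$ replaced by $n-1$, namely $\ell_\infty(r') \le w_{n-1} - 2^{v_2(r')+1}$ and $\ell_1(r') \le w_n - 3\cdot 2^{v_2(r')}$, which yields $\phi_{\Z[i]}(r') \le n-1 < n$.

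The heart of the argument is the case analysis for the first correction $r' = r - \tfrac{u_b}{u_r}b$. Under condition~1 the imaginary part of $u_r r'$ is $\pm(m(r) - m(b))$, of absolute value at most $\max(m(r), m(b))$, while the real part has magnitude $\ell_\infty(b) - \ell_\infty(r)$; the Gauss inequalities together with the upper bounds coming from $\phi_{\Z[i]}(b) = n$ then push $\ell_\infty(r')$ and $\ell_1(r')$ into the region certified by Corollary~\ref{coro_fcn_bounds}. Condition~2, $m(r) + m(b) \le \ell_\infty(r)$, handles the anti-aligned case: there the imaginary part of $u_r r'$ becomes $\pm(m(r) + m(b))$, but it is still dominated by $\ell_\infty(r) \le \ell_\infty(b)$, so the same estimates go through. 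Condition~3 is the borderline anti-aligned band $m(b) < \ell_\infty(r) < m(b) + m(r)$, where its extra clause $\ell_\infty(b) - m(r) > w_{n-1} - 2^{v_2(b)+1}$ is exactly what keeps $\ell_1(r')$ inside the $\phi \le n-1$ bound once $\ell_\infty$ has dropped. When none of conditions 1--3 holds the remainder is anti-aligned with the imaginary parts in genuine conflict and $\ell_\infty$ not reduced enough; here I switch to $c = iu_b/(s(r)u_r)$, whose sign $s(r)$ is chosen precisely so that the rotated $u_r r' = u_r r \mp i\,u_b b$ has its new imaginary part appear as a difference $\pm(m(r) - \ell_\infty(b))$ rather than a sum, and I re-run the same estimates, now using the failure of condition~3's clause, i.e.\ $\ell_\infty(b) - m(r) \le w_{n-1} - 2^{v_2(b)+1}$, to control $\ell_\infty(r')$.

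The step I expect to be the main obstacle is the $2$-adic bookkeeping: since $c$ is a unit, $v_2(r') = v_2(r - cb) \ge \min(v_2(r), v_2(b))$, with a possible jump when $v_2(r) = v_2(b)$, and the bounds that Corollary~\ref{coro_fcn_bounds} demands of $r'$ become \emph{stronger} as $v_2(r')$ grows, so a mere lower bound on $v_2(r')$ is useless. One must show that whenever $v_2(r')$ is large the vector $r'$ is correspondingly small (here $w_{m+2} = 2w_m$ keeps the two sides in step, and the hypothesis $\phi_{\Z[i]}(r) \ge n$ is precisely what rules out the configurations in which $v_2(r')$ would blow up). Dovetailing with this is the verification that conditions 1--3 and their negation genuinely partition the possibilities, so that in every case exactly one of the two displayed corrections is certified; once these two points are pinned down, what remains is the routine quadratic-inequality manipulation outlined above.
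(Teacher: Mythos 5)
Your outline reproduces the paper's architecture exactly: the same case split (the three listed conditions, then the complement divided according to whether $m(b) \geq \ell_{\infty}(r)$ or $m(b) < \ell_{\infty}(r) < m(b)+m(r)$ with $\ell_{\infty}(b) - m(r) \leq w_{n-1} - 2^{v_2(b)+1}$), the same two unit corrections, and the same target inequalities $\ell_{\infty}(r') \leq w_{n-1} - 2^{v_2(r')+1}$ and $\ell_1(r') \leq w_n - 3\cdot 2^{v_2(r')}$. But what you defer as ``routine quadratic-inequality manipulation'' is the entire content of the proof, and two ingredients it requires are absent from your sketch. First, the $2$-adic bookkeeping you flag as the main obstacle is resolved not by a general principle but by the specific fact (Lemma \ref{2_val_lemma}) that $\phi_{\Z[i]}(r) \geq \phi_{\Z[i]}(b)$ already forces $v_2(r) > v_2(b)$, whence $v_2(r - cb) = v_2(b)$ exactly, with no possible jump; you gesture at this but never derive it, and without it the exponent appearing in the bounds you must verify is not even determined. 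Second, the upper bounds on $b$ alone do not suffice: the estimates only close because $\phi_{\Z[i]}(r) \geq n$ yields strong \emph{lower} bounds on $r$ (Lemma \ref{lemma_remainder_properties}: either $\ell_{\infty}(r) = w_{n-1} - 2^{v_2(r)}$ or $\ell_1(r) \geq w_n - 2^{v_2(r)+1}$, whence $\ell_{\infty}(r) \geq w_n - w_{n-1}$ and, in one branch, $m(r) \geq w_n - w_{n-1}$), which must be subtracted from the bounds on $b$; under condition 3, for instance, one must first pin down $\ell_{\infty}(r) = w_{n-1} - 2^{v_2(r)}$ before the $\ell_1$ estimate works.

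Moreover, even with those inputs the inequalities do not close by size alone. Under condition 2 the direct estimate gives only $\ell_1(u_b b - u_r r) \leq \ell_{\infty}(b) \leq w_n - 2^{v_2(b)+1}$, which falls short of the needed $w_n - 3\cdot 2^{v_2(b)}$ by $2^{v_2(b)}$; the paper bridges this with a parity argument (if $\ell_{\infty}(r) = m(r)+m(b)$ exactly, then $2^{v_2(b)+1} \mid m(b)$ while $2^{v_2(b)+1} \nmid \ell_{\infty}(b)$, so one of the two inequalities is strict and divisibility by $2^{v_2(b)}$ supplies the missing step). A similar divisibility refinement is needed in the case handled by the second correction with $m(b) < \ell_{\infty}(r) < m(b)+m(r)$. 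These are load-bearing steps, not routine manipulation, so as it stands the proposal is a correct plan rather than a proof.
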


\section{Background}
Given $z \in \C$, its complex conjugate is $\overline{z} = \Re(z) - \Im(z)i$.
The algebraic norm of a complex number is the product of itself and its conjugate, i.e.,
\[\Nm(x +yi) = ( x +yi)(x -yi) = x^2 + y^2.\]
Complex conjungation is multiplicative, so that $\overline{xy} = \overline{x} \cdot \overline{y}$, 
and hence the norm is also multiplicative.  
In other words, if $a$ and $b$ are complex numbers, then $\Nm(ab) = \Nm(a) \Nm(b)$.
The Gaussians' multiplicative units, $\Z[i]^{\star} = \pm \{1, i\}$, have norm one, so 
both the norm and $\phi_{\Z[i]}$ are invariant under multiplication by units.  
If $u \in \Z[i]^{\times}$ and $z \in \Z[i] \setminus \{0\}$, then $\Nm(z) = \Nm(uz)$ and 
$\phi_{\Z[i]}(z) = \phi_{\Z[i]}(uz)$.

Suppose $z \in \Z[i] \setminus \{0\}$.  If $v_{1+i}(z) \geq 2$, then $2 | z$ and thus $\Re(z)$ and $\Im(z)$ are both even.
If $v_{1+i}(z) =1$, then $v_{1+i}(z - (1+i)) \geq 2$ and $\Re(z)$ and $\Im(z)$ are both odd.
We observe that when $v_{1+i}(z) =0$, exactly one of $\Re(z)$ and $\Im(z)$ is even, forcing oddity upon $\ell_1(z)$ and $\ell_{\infty}(z)-m(z)$. %to be odd.

If $x$ is a real number and $n$ is the integer such that $n \leq x < n+1$, we write $\lfloor x \rfloor =n$ and $\lceil x \rceil = n+1$.
We also use the nearest integer function, given by nint$(x)$ or 
\[ \lfloor x \rceil = 
\begin{cases} \lfloor x \rfloor & \text{ if } 0 \leq x - \lfloor x \rfloor \leq 1/2 \\ \lceil x \rceil & \text{ otherwise} \end{cases}
.\]

%In practice, we usually use the following corollary to compute $\phi_{\Z[i]}$, rather than Theorem \ref{formula}.

\section{Gauss's division algorithm}

Suppose $a,b \in \Z[i] \setminus \{0\}$, with $a = x +yi$ and $b = c +di$.
Then $\frac{a}{b}$ equals 
\begin{align*}
\frac{a \overline{b}}{b \overline{b}} & = \frac{a \overline{b}}{\Nm(b)} = \frac{(xc -yd) + (xd+yc)i}{\Nm(b)}.\\
\intertext{If $q_0 = \left \lfloor \frac{xc -yd}{\Nm(b)} \right \rceil $, $q_1 = \left \lfloor \frac{xd +yc}{\Nm(b)} \right \rceil $,
$f_0 = \frac{xc-yd}{\Nm(b)} -q_0$, and $f_1 = \frac{xd +yc}{\Nm(b)} -q_1$, then}
\frac{a\overline{b}}{\Nm(b)} &= (q_0 + q_1 i) + (f_0 + f_1 i), \text{ where } |f_0|, |f_1| \leq \frac{1}{2}.\\
\intertext{Multiplying through by $b$ shows}
a & = (q_0 + q_1 i)b + (f_0 + f_1 i) b,\\
\intertext{where $q_0 + q_1 i, (f_0 + f_1 i)b \in \Z[i]$.  The norm is multiplicative, so}
\Nm((f_0 + f_1i)b) &= \Nm(f_0 + f_1 i) \Nm(b) \leq \left ( \left (\frac{1}{2} \right )^2 + \left (\frac{1}{2} \right )^2 \right ) \Nm(b) \leq \frac{\Nm(b)}{2}.\\
\end{align*}
We call $q_0 + q_1 i$ the \textbf{Gauss quotient}, $f_0 + f_1 i$ the \textbf{Gauss fractional remainder}, and $(f_0 + f_1i)b$ the \textbf{Gauss remainder}.
This algorithm has straightforward consequences.

\begin{lemma}\label{remainder_zr}  Suppose $a,b, z \in \Z[i] \setminus \{0\}$.  The pair $a$ and $b$ have Gauss quotient $q$ and Gauss remainder $r$
 if and only if $za$ and $zb$ have Gauss quotient $q$ and Gauss remainder $zr$.
\end{lemma}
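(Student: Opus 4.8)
The plan is to prove Lemma \ref{remainder_zr} directly from the explicit description of Gauss's division algorithm given just above, using only the multiplicativity of the norm and the uniqueness of the Gauss quotient. Since the statement is an ``if and only if'' and the two directions are symmetric under replacing $z$ by $z^{-1}$ (inside $\Frac(\Z[i]) = \Q(i)$), I would first establish one direction carefully and then note that the converse follows by the same argument applied to $z^{-1}a = z^{-1}(za)$ and $z^{-1}b = z^{-1}(zb)$, or more cleanly, simply prove both directions at once by characterizing the Gauss quotient uniquely.

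The key observation is that the Gauss quotient of $a$ and $b$ is \emph{uniquely determined}: it is the Gaussian integer $q_0 + q_1 i$ where $q_0 = \left\lfloor \frac{\Re(a\overline{b})}{\Nm(b)} \right\rceil$ and $q_1 = \left\lfloor \frac{\Im(a\overline{b})}{\Nm(b)} \right\rceil$, i.e., the nearest Gaussian integer to $a/b \in \Q(i)$ (with the fixed tie-breaking convention from the nearest-integer function). So the first step is to compute $(za)\overline{(zb)} = z\overline{z}\, a \overline{b} = \Nm(z)\, a\overline{b}$ and $\Nm(zb) = \Nm(z)\Nm(b)$. Therefore $\frac{(za)\overline{(zb)}}{\Nm(zb)} = \frac{\Nm(z) a \overline{b}}{\Nm(z)\Nm(b)} = \frac{a\overline{b}}{\Nm(b)}$, so the real and imaginary parts feeding into the nearest-integer function are literally identical for the pair $(za, zb)$ as for the pair $(a,b)$. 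Consequently $za$ and $zb$ have the same Gauss quotient $q = q_0 + q_1 i$ and the same Gauss fractional remainder $f_0 + f_1 i$ as $a$ and $b$.

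For the remainders, I would then just multiply through: from $a = qb + r$ with $r = (f_0 + f_1 i) b$, multiplying by $z$ gives $za = q(zb) + zr$, and $zr = (f_0 + f_1 i)(zb)$ exhibits $zr$ as the Gauss remainder of $za$ and $zb$ (it is a Gaussian integer since $z, r \in \Z[i]$, and it has the correct form $(\text{fractional remainder}) \cdot (zb)$ with the same fractional part). This gives the forward direction. For the converse, since $z \in \Z[i] \setminus \{0\}$ is a nonzero element of $\Q(i)$, the same computation run with the pair $(za, zb)$ and the scalar $z^{-1} \in \Q(i)$ shows the fractional remainders again agree; alternatively, knowing that the Gauss quotient is a \emph{function} of the pair, if $za$ and $zb$ have Gauss quotient $q$ and remainder $zr$, then by the forward direction $a = z^{-1}(za)$ and $b = z^{-1}(zb)$ would need $z^{-1}$ to be a Gaussian integer, which it is not in general, so I should instead argue purely via the uniqueness characterization: the nearest-Gaussian-integer to $(za)/(zb) = a/b$ is $q$, hence $q$ is also the Gauss quotient of $a$ and $b$, and $a - qb = z^{-1}(za - q\,zb) = z^{-1}(zr) = r$ is then forced to be the Gauss remainder of $a$ and $b$.

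I do not expect a genuine obstacle here; the only subtlety worth stating explicitly is that one must check the tie-breaking convention in $\lfloor \cdot \rceil$ is respected — but since the arguments $\frac{\Re(a\overline b)}{\Nm(b)}$ and $\frac{\Im(a\overline b)}{\Nm(b)}$ passed to the nearest-integer function are \emph{exactly equal} (not merely equal up to sign or rounding) for the two pairs, no tie-breaking ambiguity can arise. The proof is thus a short computation plus an appeal to the uniqueness of the Gauss quotient as the nearest Gaussian integer to $a/b$.
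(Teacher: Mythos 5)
Your proposal is correct and follows essentially the same route as the paper: both reduce to the observation that $\frac{(za)\overline{(zb)}}{\Nm(zb)} = \frac{a\overline{b}}{\Nm(b)}$, so the two pairs feed identical values into the nearest-integer function and hence share the same Gauss quotient and fractional remainder, from which the remainder of $(za,zb)$ is $z$ times that of $(a,b)$. Your extra remark that the tie-breaking convention is automatically respected because the arguments are literally equal is a nice touch, though the paper leaves it implicit.
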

\begin{proof}
The fraction $\frac{za}{zb} \cdot \frac{\overline{zb}}{\overline{zb}}$ simplifies to $\frac{a}{b}$, so $za$ and $zb$ have the same Gauss quotient and fractional
remainder as $a$ and $b$.  This means that the Gauss remainder of $za$ and $zb$ is $za-qzb =zr$, or $z$ times the Gauss remainder of $a$ and $b$.
\end{proof}

\begin{lemma}\label{bounds_lemma}  If $a, b \in \Z[i] \setminus \{0\}$ have Gauss remainder $r$ and if $\phi_{\Z[i]}(b) =n$,
 then $\ell_1(r) \leq \ell_{\infty}(b) < w_{n}$ and $\ell_{\infty}(r) \leq \frac{\ell_1(b)}{2}< w_{n-1}$.
Equality occurs only if $\frac{r}{b} \in \pm \frac{1}{2} \{ 1, i, 1 \pm i\}.$
\end{lemma}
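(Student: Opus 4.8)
The plan is to isolate the fractional remainder produced by Gauss's algorithm and estimate it with two elementary facts. From the description of the division algorithm above, $r = (f_0 + f_1 i)\,b$ with $|f_0|, |f_1| \leq \frac{1}{2}$; put $f = f_0 + f_1 i$. The first fact is that multiplication by $1+i$ interchanges the two norms up to a factor of $2$: since $(1+i)(x+yi) = (x - y) + (x+y)i$ and $\max(|p-q|, |p+q|) = |p| + |q|$ for all real $p, q$, we have $\ell_{\infty}((1+i)z) = \ell_1(z)$ and $\ell_1((1+i)z) = 2\,\ell_{\infty}(z)$ for every $z \in \C$.

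The second fact is a single inequality to which both displayed bounds reduce: if $g = g_0 + g_1 i$ with $|g_0|, |g_1| \leq \frac{1}{2}$ and $w \in \C$, then $\ell_{\infty}(gw) \leq \frac{1}{2}\ell_1(w)$. This is immediate, since $\Re(gw) = g_0 \Re(w) - g_1 \Im(w)$ and $\Im(gw) = g_0 \Im(w) + g_1 \Re(w)$, and each of these has absolute value at most $|g_0|\,|\Re(w)| + |g_1|\,|\Im(w)| \leq \frac{1}{2}(|\Re(w)| + |\Im(w)|) = \frac{1}{2}\ell_1(w)$. Taking $g = f$ and $w = b$ gives $\ell_{\infty}(r) \leq \frac{1}{2}\ell_1(b)$; taking $g = f$ and $w = (1+i)b$ and applying the first fact twice gives $\ell_1(r) = \ell_{\infty}((1+i)r) = \ell_{\infty}\bigl(f\,(1+i)b\bigr) \leq \frac{1}{2}\ell_1\bigl((1+i)b\bigr) = \ell_{\infty}(b)$.

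For the two strict inequalities, $\phi_{\Z[i]}(b) = n$ means in particular $\phi_{\Z[i]}(b) \not> n$, so the equivalence in the second assertion of Corollary~\ref{coro_fcn_bounds} (taken with $n+1$ in place of $n$) yields $\ell_{\infty}(b) \leq w_n - 2^{v_2(b)+1} \leq w_n - 2 < w_n$ and $\ell_1(b) \leq w_{n+1} - 3 \cdot 2^{v_2(b)} \leq w_{n+1} - 3$. Since $w_{n+1} = 2w_{n-1}$, the last bound gives $\ell_{\infty}(r) \leq \frac{1}{2}\ell_1(b) \leq w_{n-1} - \frac{3}{2} < w_{n-1}$. (If $n = 0$ then $b$ is a unit and $r = 0$, so there is nothing to check.)

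The remaining point, and the one that takes genuine work, is the list of equality cases. Equality in $\ell_{\infty}(r) = \frac{1}{2}\ell_1(b)$ forces the triangle inequalities of the second fact, used with $w = b$, to be equalities: the coordinate of $r$ realizing $\ell_{\infty}(r)$ must satisfy $|g_0 \Re(w) - g_1 \Im(w)| = |g_0|\,|\Re(w)| + |g_1|\,|\Im(w)| = \frac{1}{2}(|\Re(w)| + |\Im(w)|)$, or the analogous identity with the imaginary part; equality in $\ell_1(r) = \ell_{\infty}(b)$ forces the same with $w = (1+i)b$. When both coordinates of the relevant $w$ are nonzero, the middle equality pins down $|g_0| = |g_1| = \frac{1}{2}$, so that $f = r/b \in \pm\frac{1}{2}\{1 \pm i\}$. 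The leftover case — $w$ real or purely imaginary, equivalently $|\Re(b)| = |\Im(b)|$ when $w = (1+i)b$ — must be settled by combining those tightness conditions with the integrality $r \in \Z[i]$, and it is precisely this final case analysis, where the extra points $\pm\frac{1}{2}\{1, i\}$ enter, that I expect to be the main obstacle; everything preceding it is routine.
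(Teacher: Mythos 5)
Your derivation of the two inequalities is correct and is a cleaner route than the paper's: the paper fixes units so that $ub = x+yi$ and $w\frac{r}{b} = f_0+f_1i$ have nonnegative coordinates, writes out $\ell_1(r)$ and $\ell_\infty(r)$ explicitly as maxima of linear combinations such as $|f_0(x+y)+f_1(x-y)|$, and bounds each by $\tfrac{1}{2}\ell_1(b)+\tfrac{1}{2}(\ell_\infty(b)-m(b)) = \ell_\infty(b)$, whereas you package the same estimate as the single inequality $\ell_\infty(fw)\leq\tfrac{1}{2}\ell_1(w)$ together with the observation that multiplication by $1+i$ swaps $\ell_1$ and $\ell_\infty$ up to a factor of $2$. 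Your use of Corollary \ref{coro_fcn_bounds} for the strict bounds $\ell_\infty(b)<w_n$ and $\tfrac{1}{2}\ell_1(b)<w_{n-1}$ matches what the paper implicitly does.

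The genuine gap is the equality statement, which you explicitly leave unfinished. This is not a cosmetic omission: the equality clause is exactly what Corollary \ref{coro_strictly_less} consumes, so the lemma is not proved without it. The paper's own treatment is one sentence --- ``these inequalities are strict unless $f_0,f_1\in\{0,\tfrac{1}{2}\}$'' in the normalized coordinates, whence $\frac{r}{b}\in\pm\tfrac{1}{2}\{1,i,1\pm i\}$ --- i.e., it asserts that tightness of the triangle inequality forces both $|f_0|$ and $|f_1|$ into $\{0,\tfrac{1}{2}\}$ simultaneously, and it does not separately address the degenerate case you identify (where one coordinate of $b$, or of $(1+i)b$, vanishes, so that equality only pins down one of $|f_0|,|f_1|$). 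Your instinct that this case needs the integrality of $r$ is right, and in fact it is where the real content lies: for instance $b=6$, $r=3+i$ gives $\ell_\infty(r)=\tfrac{1}{2}\ell_1(b)$ with $\frac{r}{b}=\tfrac{1}{2}+\tfrac{1}{6}i$, so the equality condition cannot be established by the triangle-inequality argument alone and must instead be routed through what is actually needed downstream (in Corollary \ref{coro_strictly_less}, the hypothesis $\phi_{\Z[i]}(r)\geq\phi_{\Z[i]}(b)$ is available and must be brought to bear). As submitted, the proposal proves the displayed inequalities but not the lemma's final claim.
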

\begin{proof}
There exist units $u, w \in \Z[i]^{\times}$ such that $\Re(ub)$, $\Im(ub)$, $\Re\left ( w \frac{r}{b} \right )$, 
$\Im \left ( w \frac{r}{b} \right ) \geq 0$.
For ease of notation, let $ub = x +yi$ and $w \frac{r}{b} = f_0 + f_1 i$.
Then $r = (xf_0 -yf_1) + (xf_1 + yf_0)i$, so 
\begin{align*}
\ell_1(r) &= \max(|xf_0 - yf_1 + xf_1 + y f_0|,| yf_1 - xf_0 + x f_1 +yf_0|)\\
 &= \max (|f_0(x+y) + f_1(x-y)|,| f_0(y-x) +f_1(x+y)|)\\
 &\leq \frac{\ell_1(b)}{2} + \frac{\ell_{\infty}(b) - m(b)}{2} = \ell_{\infty}(b) < w_n\\
 \intertext{and}
 \ell_{\infty}(r) &= \max(|xf_0 - yf_1|, |xf_1 + y f_0|) \leq \frac{\ell_1(b)}{2} < \frac{w_{n+1}}{2} = w_{n-1}.
 \end{align*}
 These inequalities are strict, unless $f_0, f_1 \in \left \{0, \frac{1}{2} \right \}$.
 \end{proof}
 
\begin{coro} \label{coro_strictly_less} If $a,b \in \Z[i] \setminus \{0\}$ have Gauss remainder $r$ and $\phi_{\Z[i]}(r) \geq \phi_{\Z[i]}(b)$,
then $\ell_{\infty}(r) < \frac{\ell_1(b)}{2}$ and $\ell_1(r) < \ell_{\infty}(b)$.
\end{coro}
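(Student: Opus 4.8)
The plan is to read this off from Lemma~\ref{bounds_lemma}. That lemma already gives $\ell_1(r)\le\ell_\infty(b)$ and $\ell_\infty(r)\le\ell_1(b)/2$, and it pins down the equality case: if either of those two inequalities is in fact an equality, then $r/b\in\pm\tfrac12\{1,i,1\pm i\}$. So the whole task reduces to showing that $r/b\in\pm\tfrac12\{1,i,1\pm i\}$ is incompatible with the standing hypothesis $\phi_{\Z[i]}(r)\ge\phi_{\Z[i]}(b)$; once that is done, neither inequality can be an equality, and both must be strict.

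To carry this out, I would suppose $r/b\in\pm\tfrac12\{1,i,1\pm i\}$ and invert: then $b/r\in\{\pm2,\pm2i,\pm(1+i),\pm(1-i)\}$, and since $2=-i(1+i)^2$ and $1\pm i$ are unit multiples of $1+i$, in every case $b=u(1+i)^m r$ for some unit $u\in\Z[i]^\times$ and some $m\in\{1,2\}$. I would then use Lenstra's description of $\phi_{\Z[i]}(z)$ as the least degree of a $(1+i)$-ary expansion of $z$, together with the unit-invariance of $\phi_{\Z[i]}$, to show $\phi_{\Z[i]}(b)=\phi_{\Z[i]}(r)+m$: multiplying a minimal expansion of $r$ by $(1+i)^m$ and absorbing $u$ into the leading coefficient exhibits $b$ with an expansion of degree $\phi_{\Z[i]}(r)+m$, so $\phi_{\Z[i]}(b)\le\phi_{\Z[i]}(r)+m$; conversely, in a minimal expansion $b=\sum_{k=0}^d v_k(1+i)^k$ the relation $(1+i)^m\mid b$ forces the bottom $m$ coefficients to vanish (the only element of $\{0,\pm1,\pm i\}$ that $1+i$ divides is $0$, applied inductively), so dividing through by $(1+i)^m$ exhibits $r$ with an expansion of degree $d-m$, giving $\phi_{\Z[i]}(r)\le\phi_{\Z[i]}(b)-m$. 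Hence $\phi_{\Z[i]}(b)=\phi_{\Z[i]}(r)+m\ge\phi_{\Z[i]}(r)+1>\phi_{\Z[i]}(r)$, contradicting $\phi_{\Z[i]}(r)\ge\phi_{\Z[i]}(b)$.

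I expect the only real friction to be the second bound in the previous paragraph: verifying that $(1+i)^m\mid b$ genuinely kills the bottom $m$ coefficients of a \emph{minimal} expansion and that the quotient is still a bona fide expansion (leading coefficient nonzero). An alternative that avoids $(1+i)$-expansions altogether is to get $\phi_{\Z[i]}(b)=\phi_{\Z[i]}(r)+m$ straight from Theorem~\ref{formula}, by tracking how $v_2(\gcd(\Re z,\Im z))$, the index $n$, and the $\ell_1$-inequality transform under $z\mapsto(1+i)z$ and $z\mapsto2z$; that route is elementary but more mechanical, and either version closes the argument.
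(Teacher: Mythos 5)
Your proposal is correct and follows essentially the same route as the paper: the paper's proof also invokes the equality case of Lemma~\ref{bounds_lemma} to get $\frac{r}{b}\in\Z[i]^{\times}\{(1+i)^{-1},(1+i)^{-2}\}$ and then notes $\phi_{\Z[i]}(r)\in\{\phi_{\Z[i]}(b)-1,\phi_{\Z[i]}(b)-2\}$, contradicting $\phi_{\Z[i]}(r)\ge\phi_{\Z[i]}(b)$. The only difference is that you supply the justification (via Lenstra's $(1+i)$-ary characterization) for the drop in $\phi_{\Z[i]}$, which the paper states without proof.
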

\begin{proof} %The set $\pm \frac{1}{2} \{ 1, i, 1 \pm i \} = \Z[i]^{\times} \{(1+i)^{-1}, (1+i)^{-2} \}$, so 
If $f_0, f_1 \in \left \{0, \frac{1}{2} \right \}$, then $\frac{r}{b} \in \Z[i]^{\times} \{(1+i)^{-1}, (1+i)^{-2} \}$
and $\phi_{\Z[i]}(r) \in \{ \phi_{\Z[i]}(b) -1, \phi_{\Z[i]}(b) -2 \}$.
This means that if $\phi_{\Z[i]}(r) \geq \phi_{\Z[i]}(b)$, our inequalities are strict.
\end{proof}

\section{Gauss remainders and the minimal Euclidean function}

Applying Lemma \ref{bounds_lemma} shows the Gauss remainder and quotient often work for $\phi_{\Z[i]}$.

\begin{lemma}\label{2_val_lemma} If $a,b \in \Z[i] \setminus \{0\}$ have Gauss remainder $r$ and $v_2(r) \leq v_2(b)$,
then $\phi_{\Z[i]}(r) < \phi_{\Z[i]}(b)=n$.
\end{lemma}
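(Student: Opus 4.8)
The plan is to reduce to the case $v_2(r)=0$ and then read the conclusion off Corollary~\ref{coro_fcn_bounds}. For the reduction, put $k=v_2(r)$ and $j=v_2(b)$, so $k\le j$ by hypothesis; since $a=qb+r$ and $2^k$ divides both $b$ and $r$, it also divides $a$, so $a'=a/2^k$, $b'=b/2^k$, $r'=r/2^k$ all lie in $\Z[i]$, and Lemma~\ref{remainder_zr} (with $z=2^k$) shows $a',b'$ have Gauss quotient $q$ and Gauss remainder $r'$. Since multiplying a minimal $(1+i)$-ary expansion by $1+i$ again yields a minimal one, $\phi_{\Z[i]}((1+i)z)=\phi_{\Z[i]}(z)+1$ and $\phi_{\Z[i]}(2z)=\phi_{\Z[i]}(z)+2$ for every nonzero $z$; hence the claim is equivalent to $\phi_{\Z[i]}(r')<\phi_{\Z[i]}(b')$, with $\phi_{\Z[i]}(b')=n-2k$, and I may assume $v_2(r)=0$ henceforth. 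Note also that $r\neq 0$ forces $b$ to be a non-unit (else its Gauss remainder would be $0$), so $n=\phi_{\Z[i]}(b)\ge 1$ and $w_{n-1}$ is defined.

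By the contrapositive of Corollary~\ref{coro_fcn_bounds}, taken with $v_2(r)=0$, it suffices to prove $\ell_\infty(r)\le w_{n-1}-2$ and $\ell_1(r)\le w_n-3$. For the first, Lemma~\ref{bounds_lemma} gives $\ell_\infty(r)\le\ell_1(b)/2$, while $\phi_{\Z[i]}(b)\le n$ gives $\ell_1(b)\le w_{n+1}-3\cdot 2^{v_2(b)}=2w_{n-1}-3\cdot 2^{v_2(b)}$; hence $\ell_\infty(r)\le w_{n-1}-\tfrac{3}{2}\cdot 2^{v_2(b)}\le w_{n-1}-\tfrac{3}{2}$, and integrality of $\ell_\infty(r)$ upgrades this to $\ell_\infty(r)\le w_{n-1}-2$. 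For the second, Lemma~\ref{bounds_lemma} and $\phi_{\Z[i]}(b)\le n$ give $\ell_1(r)\le\ell_\infty(b)\le w_n-2^{v_2(b)+1}$, which is $\le w_n-4$ when $v_2(b)\ge 1$. So the only remaining case is $v_2(b)=0$, i.e.\ $v_2(r)=v_2(b)=0$, in which $\ell_1(r)\le\ell_\infty(b)\le w_n-2$ and everything comes down to showing $\ell_1(r)\neq w_n-2$.

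This is the crux, and where one must be careful. Suppose $\ell_1(r)=w_n-2$; then $\ell_1(r)=\ell_\infty(b)$, and tracing through the inequalities in the proof of Lemma~\ref{bounds_lemma} shows that this equality can occur only if either $\ell_\infty(b)=m(b)$ or $r/b$ is a unit times $\tfrac{1+i}{2}$. The first is impossible: the last sentence of Corollary~\ref{coro_fcn_bounds} gives $m(b)\le w_{n-1}-2^{v_2(b)+1}=w_{n-1}-2$, so $w_n-2=\ell_\infty(b)=m(b)\le w_{n-1}-2$, contradicting $w_{n-1}<w_n$. In the second case $2r$ is a unit times $(1+i)b$, so $\phi_{\Z[i]}(r)=\phi_{\Z[i]}(2r)-2=\phi_{\Z[i]}((1+i)b)-2=\phi_{\Z[i]}(b)-1$, which already proves the lemma. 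Therefore either $\ell_1(r)=w_n-2$ and we are done directly, or $\ell_1(r)\le w_n-3$, in which case Corollary~\ref{coro_fcn_bounds} together with $\ell_\infty(r)\le w_{n-1}-2$ gives $\phi_{\Z[i]}(r)\le n-1<\phi_{\Z[i]}(b)$.

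The main obstacle is exactly this boundary configuration, $v_2(r)=v_2(b)=0$ with $\ell_1(r)$ meeting its bound $\ell_\infty(b)$ on the nose; everything else is a short estimate. The initial reduction to $v_2(r)=0$ is worth isolating because without it the $\ell_\infty$-estimate above loses a factor of two precisely when $v_2(r)=v_2(b)$ and no longer yields $\phi_{\Z[i]}(r)<\phi_{\Z[i]}(b)$ by itself.
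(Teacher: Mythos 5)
Your proof is correct and follows essentially the same route as the paper's: bound $\ell_\infty(r)$ and $\ell_1(r)$ via Lemma \ref{bounds_lemma}, feed the bounds into Corollary \ref{coro_fcn_bounds}, and dispose of the boundary case $\ell_1(r)=\ell_\infty(b)$ through the equality clause of Lemma \ref{bounds_lemma}, where $r/b$ is a unit multiple of $(1+i)^{-1}$ or $(1+i)^{-2}$ and so $\phi_{\Z[i]}(r)<\phi_{\Z[i]}(b)$ directly. Your preliminary reduction to $v_2(r)=0$ is a repackaging of the paper's use of the divisibility of $\ell_1(r)$, $\ell_\infty(r)$, and $w_{n-1}$ by $2^{v_2(r)}$ to sharpen the strict inequalities into the bounds the corollary requires.
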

\begin{proof} 
Lemma \ref{bounds_lemma} shows that if $\ell_1(r) = \ell_{\infty}(b)$, then 
$\frac{r}{b} \in \pm \frac{1}{2} \{1, i, 1 \pm i \}$ and thus $\phi_{\Z[i]}(r) < \phi_{\Z[i]}(b)$.
We therefore assume 
\begin{align*}
\ell_1(r) & < \ell_{\infty}(b) \leq w_n - 2^{v_2(b) +1} \leq w_n - 2^{v_2(r) +1}\\
\intertext{and}
2^{v_2(r)} &\leq \ell_{\infty}(r) < \frac{\ell_1(b)}{2} \leq \frac{ w_{n+1} - 3 \cdot 2^{v_2(r)}} {2}.
\end{align*}
The second inequality shows $2^{v_2(r)} | w_{n-1}$, $3 \cdot 2^{v_2(r)} \leq w_{n-1}$, 
 $\ell_{\infty}(r) \leq w_{n-1} - 2^{v_2(r) +1}$, and 
$\ell_1(r) \leq w_n - 3 \cdot 2^{v_2(r)}$.
We conclude $\phi_{\Z[i]}(r) \leq n-1 < \phi_{\Z[i]}(b)$.
%Since $v_2(r) \leq v_2(b)$, since $\phi_{\Z[i]}((1+i)^j x) = j +x$, and since the Gauss remainder of $(1+i)^j x$ and $(1+i)^j y$ 
%is $(1+i)^j$ times the Gauss remainder of $x$ and $y$,  we may assume without loss of generality that $v_{1+i}(r) =0$.
%Lemma \ref{bounds_lemma} shows $\ell_{\infty}(r) \leq \left \lfloor \frac{\ell_1(b)}{2} \right \rfloor 
%\leq \left \lfloor \frac{w_{n+1} -3}{2} \right \rfloor = w_{n-1} -2$.
%Lemma \ref{bounds_lemma} shows $\ell_1(r) \leq \ell_{\infty}(b) =w_n -2$, but since $\ell_1(r)$ must be odd, we 
%know that $\ell_1(r) \leq w_n -3$.  
%We conclude by Corollary \ref{coro_fcn_bounds} that $\phi_{\Z[i]}(r) \leq n-1 < \phi_{\Z[i]}(b)$. 
\end{proof}
We do not even always need to compute the Gauss remainder to determine whether it is a remainder for $\phi_{\Z[i]}$.
\begin{coro}\label{not_even} If $a,b \in \Z[i] \setminus \{0\}$ have Gauss remainder $r$ and $v_2(a) \leq v_2(b)$,
then $\phi_{\Z[i]}(r) < \phi_{\Z[i]}(b)$.
\end{coro}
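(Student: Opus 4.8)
The plan is to reduce to Lemma \ref{2_val_lemma} by tracking how $v_2$ changes across a single step of Gauss's division. First I would invoke the basic relation $a = qb + r$ from Gauss's algorithm, which forces $v_2(r) \geq \min(v_2(qb), v_2(a))$; the only subtlety is that $v_2(r)$ can exceed this minimum only when $v_2(qb) = v_2(a)$ (the non-archimedean property of the $2$-adic valuation, which holds on $\Z[i]$ since $2 = -i(1+i)^2$ and $(1+i)$ is prime). The hypothesis is $v_2(a) \leq v_2(b)$, and since $v_2(qb) \geq v_2(b) \geq v_2(a)$, I get $v_2(r) \geq \min(v_2(qb), v_2(a)) = v_2(a)$ in the generic case, but I want an \emph{upper} bound on $v_2(r)$, not a lower one, so the argument has to be run more carefully.

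The cleaner route is to use Lemma \ref{remainder_zr} to strip off common powers of $(1+i)$. Let $k = v_2(a)$, so $v_2(b) \geq k$. Unfortunately $(1+i)^{2k} = (-i)^k 2^k$ divides both $a$ and $b$ only if $v_2(b) \geq k$, which is exactly our hypothesis; write $a = 2^k a'$ and $b = 2^k b'$ with $v_2(a') = 0$ and $v_2(b') = v_2(b) - k \geq 0$. By Lemma \ref{remainder_zr} applied with $z = 2^k$, the Gauss remainder of $a', b'$ is $r' = r/2^k$, and $\phi_{\Z[i]}(r) < \phi_{\Z[i]}(b)$ is equivalent to $\phi_{\Z[i]}(r') < \phi_{\Z[i]}(b')$ (both sides drop by $2k$, since multiplication by $2 = -i(1+i)^2$ shifts $\phi_{\Z[i]}$ by $2$). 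So it suffices to prove the claim when $v_2(a) = 0$, i.e., $a$ is not divisible by $1+i$.

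Now with $v_2(a') = 0$: from $a' = q b' + r'$ and $v_2(b') \geq 0$ arbitrary, I claim $v_2(r') = 0$. Indeed if $1+i$ divided $r'$, then since $1+i \mid b'$ would give $1+i \mid a'$ (contradiction), we must be in the case $v_2(b') = 0$ as well; but if both $b'$ and $r'$ are divisible by $1+i$ then so is $q b' + r' = a'$, again a contradiction. Hence $v_2(r') = 0 \leq v_2(b')$, and Lemma \ref{2_val_lemma} applied to $a', b'$ gives $\phi_{\Z[i]}(r') < \phi_{\Z[i]}(b')$, which transports back to $\phi_{\Z[i]}(r) < \phi_{\Z[i]}(b)$ as desired.

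I expect the main obstacle to be bookkeeping the case analysis on $v_2$ cleanly — specifically making sure that "$v_2(r') = 0$'' really does follow in every sub-case rather than just generically, since the non-archimedean inequality $v_2(q b' + r') \geq \min(v_2(q b'), v_2(r'))$ is an inequality and one has to argue the reverse direction by contradiction using that $1+i$ is prime in $\Z[i]$. Once that is pinned down the corollary is immediate from Lemma \ref{2_val_lemma}; no new estimates on $\ell_1$ or $\ell_\infty$ are needed.
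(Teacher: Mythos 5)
Your reduction via Lemma \ref{remainder_zr} is harmless, but the crucial intermediate claim that $v_2(r')=0$ whenever $v_2(a')=0$ is false, and your own case analysis betrays exactly where: you dispose of the sub-case $1+i \mid b'$ (where $1+i\mid r'$ would force $1+i\mid a'$), but in the sub-case $1+i \nmid b'$ you simply re-assert the same implication, which no longer applies. When $1+i$ divides neither $a'$ nor $qb'$, nothing prevents $1+i$ (or even $2$) from dividing the difference $r'=a'-qb'$; the non-archimedean inequality only bounds $v_{1+i}(r')$ from below by $0$, which is no information. The paper's own example at the start of Section 5 is a direct counterexample to your claim: for $a=9$ and $b=4+i$ one has $v_2(a)=v_2(b)=0$, yet the Gauss remainder is $r=2i$ with $v_2(r)=1$, and indeed $\phi_{\Z[i]}(2i)=2=\phi_{\Z[i]}(4+i)$. (A secondary slip: $v_2(z)=0$ means only $2\nmid z$, not $1+i\nmid z$, so the contradiction in your first sub-case is not quite set up correctly either.)

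That same example contradicts the corollary exactly as printed, so the hypothesis has to be read with strict inequality, $v_2(a)<v_2(b)$; no argument can close the equality case, and your instinct that the sub-cases needed checking was correct --- one of them genuinely fails. Under the strict hypothesis the proof is immediate and needs none of your machinery: $2^{v_2(a)+1}$ divides $b$ and hence $qb$, but does not divide $a$, so it cannot divide $r=a-qb$; therefore $v_2(r)\leq v_2(a)<v_2(b)$ and Lemma \ref{2_val_lemma} applies directly.
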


%This means that if $a,b \in \Z[i] \setminus \{0\}$ have Gauss remainder $r$ and $v_2(b) > v_2(a)$, we know, without even computing $r$, that 
%$\phi_{\Z[i]}(r) < \phi_{\Z[i]}(b)$.

%\begin{coro}\label{1+i_val_coro} If $a,b \in \Z[i] \setminus \{0\}$ have Gauss remainder $r$ and $\phi_{\Z[i]}(r) \geq \phi_{\Z[i]}(b)$, then $v_{1+i}(r) \geq v_{1+i}(b) +2$.
%\end{coro}
%\begin{proof}Lemma \ref{2_val_lemma} shows $v_2(r) \geq v_2(b) +1$ and thus $2v_2(r) \geq 2 v_2(b) +2$.
%If $v_{1+i}(b)$ is even, then 
%\begin{align*}
%v_{1+i}(r) &\geq 2v_2(r) \geq v_{1+i}(b) +2\\
%\intertext{and our claim follows.  If $v_2(b)$ is odd, then $1+i$ divides $a$ and $\frac{r}{1+i}$ is the Gauss remainder of $\frac{a}{1+i}$ and $\frac{b}{1+i}$.
%Since $v_{1+i}\left ( \frac{r}{1+i} \right )$ is even,}
%v_{1+i}\left ( \frac{r}{1+i} \right ) +1 & \geq v_{1+i} \left ( \frac{b}{1+i} \right ) +1 +2\\
%\intertext{and thus}
%v_{1+i}(r) & \geq v_{1+i}(b) +2.
%\end{align*}
%\end{proof}

\section{Constructing Alternate Remainders}

Unlike in Corollary \ref{not_even}, computing a remainder for $a,b \in \Z[i] \setminus \{0\}$ and $\phi_{\Z[i]}$ is not so cut and dry when
the two-valuation of the Gauss remainder $r$ is greater than $v_2(b)$.
Observe that $v_{1+i}(4+i) =0$, that $\phi_{\Z[i]}(2i) = 2 = \phi_{\Z[i]}(4+i)$,
and that the Gauss remainder of $9$ and $4+i$ is $2i$, so $2i$ is not the pair's remainder for $\phi_{\Z[i]}$.
%and the Gauss remainder of $7$ and $4+i$ is $-(1+2i)$, where $\phi_{\Z[i]}(-(1+2i)) =1$.
In this section, we construct alternate remainders, for when $\phi_{\Z[i]}(r) \geq \phi_{\Z[i]}(b)$.
The new remainder determines the new quotient.
If $R$ is the new remainder, then the associated quotient is the integer $\frac{a-R}{b}$.  
Any potential new remainder $R$ is an element of the set $r + b \Z[i]$.

\subsection{Valuations and Preliminaries}

It is worth noting that if $\phi_{\Z[i]}(b) =n$, then $2^{v_2(b)} \leq \ell_{\infty}(b) \leq w_{n} - 2^{v_2(b) +1}$.  
Since $2^{v_2(b) + 1 } < %3 \cdot 2^{v_2(b)} \leq 
w_n$, we see that $2^{v_2(b) +1} |w_m$ for all $m \geq n+1$.
The bound $2m(b) \leq \ell_1(b) \leq w_{n+1} - 3 \cdot 2^{v_2(b)}$ therefore implies % $2m(b) \leq w_{n+1} - 2\cdot 2^{v_2(b) +1}$ and thus 
$m(b) \leq w_{n-1} - 2^{v_2(b) +1}$. 
%Note that if $(1+i) \nmid b$, then $\ell_{\infty}(b) -m(b) \leq w_{n} -3$.

\begin{lemma}\label{diff_is_less} If $a,b \in \Z[i] \setminus \{0\}$ have Gauss remainder $r$ and $\phi_{\Z[i]}(r) \geq \phi_{\Z[i]}(b)$,
then $\ell_{\infty}(b) - m(b) \leq w_n - 3 \cdot 2^{v_2(b)}$.
\end{lemma}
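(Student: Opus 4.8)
The plan is to write $j=v_2(b)$ and $n=\phi_{\Z[i]}(b)$ and to split on the size of $m(b)$. Every quantity in sight is unchanged when $a$ and $b$ are multiplied by a unit (Lemma \ref{remainder_zr}), so I may assume $\Re(b),\Im(b)\ge 0$. Three facts are available at once: since $\phi_{\Z[i]}(r)\ge\phi_{\Z[i]}(b)$ we have $r\ne 0$, and the contrapositive of Lemma \ref{2_val_lemma} gives $v_2(r)>j$; since $\phi_{\Z[i]}(b)=n$, Corollary \ref{coro_fcn_bounds} gives $\ell_\infty(b)\le w_n-2^{j+1}$ (this is the bound recalled just above); and $2^j\mid b$ forces $2^j\mid\Re(b)$ and $2^j\mid\Im(b)$, hence $2^j\mid m(b)$, so either $m(b)\ge 2^j$ or $m(b)=0$.

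If $m(b)\ge 2^j$ the estimate is immediate and does not even use the hypothesis on $r$: $\ell_\infty(b)-m(b)\le(w_n-2^{j+1})-2^j=w_n-3\cdot 2^j$.

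The substantive case is $m(b)=0$, where $b$ is a unit times a positive rational integer $2^jx'$ with $x'$ odd, and $\ell_\infty(b)-m(b)=2^jx'$. First I would rule out $x'=1$ under the hypothesis: if $b$ were a unit times $2^j$, Lemma \ref{bounds_lemma} would force $\ell_\infty(r)\le\ell_1(b)/2=2^{j-1}$, while $v_2(r)>j$ forces $2^{j+1}\mid r$; since a nonzero Gaussian integer divisible by $2^{j+1}$ has $\ell_\infty$-norm at least $2^{j+1}$, this gives $r=0$ (and when $j=0$, $b$ is a unit and $r=0$ directly), contradicting $r\ne 0$. For $x'\ge 3$, Theorem \ref{formula} applied to $b$ (one of whose coordinates is $0$) writes $n=n_0+2j$, where $n_0$ is the least index with $x'\le w_{n_0}-2$ and the theorem's second condition $x'\le w_{n_0+1}-3$ is automatic because $w_{n_0}-2\le w_{n_0+1}-3$. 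As $x'\ge 3>w_0-2$ we have $n_0\ge 1$, so $w_{n_0}$, hence $w_{n_0}-2$, is even; since $x'$ is odd this sharpens $x'\le w_{n_0}-2$ to $x'\le w_{n_0}-3$, and therefore $\ell_\infty(b)=2^jx'\le 2^j(w_{n_0}-3)=w_{n_0+2j}-3\cdot 2^j=w_n-3\cdot 2^j$, using $w_{m+2}=2w_m$.

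The only delicate point is this $m(b)=0$ case: the bound is sharp (equality at, e.g., $b=5\cdot 2^j$) and is false for $b=2^j$, so the argument must isolate exactly that boundary instance, show that the hypothesis $\phi_{\Z[i]}(r)\ge\phi_{\Z[i]}(b)$ excludes it, and otherwise extract the extra $2^j$ from the parity clash between the odd $x'$ and the even $w_{n_0}-2$.
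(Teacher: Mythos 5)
Your proof is correct and, at its core, takes the same approach as the paper's: both arguments come down to the observation that the bound can only fail when $m(b)=0$ and $b$ is a unit times $2^{v_2(b)}$, and that case is then excluded because $v_2(r)>v_2(b)$ (the contrapositive of Lemma \ref{2_val_lemma}) is incompatible with the size bound on the remainder from Lemma \ref{bounds_lemma}. The paper isolates that critical case by contradiction via a divisibility squeeze ($2^{v_2(b)}\mid\ell_{\infty}(b)$ but $2^{v_2(b)+1}\nmid\ell_{\infty}(b)$ forces $w_n=3\cdot 2^{v_2(b)}$ and $\ell_{\infty}(b)=2^{v_2(b)}$), whereas you isolate it by a direct case split on $m(b)$ together with an explicit parity appeal to Theorem \ref{formula}; the two routes are interchangeable.
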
 
\begin{proof} Suppose, leading to contradiction, that $\ell_{\infty}(b) - m(b) > w_n - 3 \cdot 2^{v_2(b)}$,
forcing $\ell_{\infty}(b) = w_n - 2^{v_2(b) +1}$ and $m(b) =0$.
Since $2^{v_2(b) +1} \nmid \ell_{\infty}(b)$, we see that $w_n = 3 \cdot 2^{v_2(b)}$ and $\ell_{\infty}(b) = 2^{v_2(b)}$.
Lemma \ref{2_val_lemma}'s  reveals a contradiction, as 
\[2^{v_2(b)} < 2^{v_2(r)} \leq \ell_1(r) < \ell_{\infty}(b) = 2^{v_2(b)}.\]
\end{proof}

%\begin{lemma} If $\phi_{\Z[i]}(b) \leq n$, then $m(b) \leq w_{n-1} - 2^{v_2(b) +1}$.  
\begin{lemma}\label{lemma_2_val_nmid}  If $a, b \in \Z[i] \setminus \{0\}$ have Gauss remainder $r \neq 0$, if 
$2^{v_2(r)} \nmid w_{n-1}$, and if $\phi_{\Z[i]}(r) \geq \phi_{\Z[i]}(b) =n$, and , then $m(r) =0$ and $\phi_{\Z[i]} \left  ( r - \frac{u_b}{u_r} b \right ) < \phi_{\Z[i]}(b)$.
\end{lemma}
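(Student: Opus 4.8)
The plan is to exploit how restrictive the hypothesis $2^{v_2(r)}\nmid w_{n-1}$ is: combined with the Gauss-remainder bounds of Lemma \ref{bounds_lemma} and Corollary \ref{coro_strictly_less}, it forces $r$ to be a unit times the single Gaussian integer $2^{(n+1)/2}$, after which both conclusions follow by direct estimates with $\{w_m\}$. First I would record the relevant two-valuations: $v_2(w_{n-1})=\tfrac{n-1}{2}$ when $n$ is odd, and $v_2(w_{n-1})=\tfrac n2+1$ when $n$ is even (because then $n-1$ is odd, so $w_{n-1}=2^{n/2+1}$). In the even case $2^{v_2(r)}\nmid w_{n-1}$ gives $v_2(r)\ge\tfrac n2+2$, so $\ell_\infty(r)\ge 2^{v_2(r)}\ge 2^{n/2+2}>2^{n/2+1}=w_{n-1}$, contradicting $\ell_\infty(r)<w_{n-1}$ from Lemma \ref{bounds_lemma}; hence the hypotheses are vacuous for even $n$, and I may assume $n$ odd. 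Then $w_{n-1}=3\cdot 2^{(n-1)/2}$, $w_n=2^{(n+3)/2}$, $w_{n+1}=3\cdot 2^{(n+1)/2}$, and $2^{v_2(r)}\nmid w_{n-1}$ forces $v_2(r)\ge\tfrac{n+1}{2}$. Since $2^{v_2(r)}\mid\ell_\infty(r)$ and $v_2(r)\ge\tfrac{n+1}{2}$, the positive integer $\ell_\infty(r)$ is a multiple of $2^{(n+1)/2}$; as $\ell_\infty(r)<w_{n-1}=\tfrac32\cdot 2^{(n+1)/2}<2\cdot 2^{(n+1)/2}$ by Lemma \ref{bounds_lemma}, this forces $\ell_\infty(r)=2^{(n+1)/2}$, hence $v_2(r)=\tfrac{n+1}{2}$. (Also $2^{v_2(b)+1}\mid w_{n+1}=3\cdot 2^{(n+1)/2}$, by the discussion preceding Lemma \ref{diff_is_less}, so $v_2(b)\le\tfrac{n-1}{2}<v_2(r)$.)

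Now $m(r)=0$ is immediate: if $m(r)\neq 0$, then $2^{v_2(r)}\mid m(r)$ gives $m(r)\ge 2^{(n+1)/2}$, so $\ell_1(r)=\ell_\infty(r)+m(r)\ge 2^{(n+3)/2}=w_n$, contradicting $\ell_1(r)\le\ell_\infty(b)<w_n$ from Lemma \ref{bounds_lemma}. Hence one of $\Re(r),\Im(r)$ is $0$ and the other has absolute value $2^{(n+1)/2}$, so $u_r r=\ell_\infty(r)=2^{(n+1)/2}$ and $r=\overline{u_r}\,2^{(n+1)/2}$.

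For the last claim, set $z=u_b b-2^{(n+1)/2}$; since $1/u_r=\overline{u_r}$ we have $r-\tfrac{u_b}{u_r}b=-\overline{u_r}\,z$, so by unit-invariance of $\phi_{\Z[i]}$ it suffices to show $\phi_{\Z[i]}(z)\le n-1$. In every case (including $\ell_\infty(b)=m(b)$, where $u_b b=\ell_\infty(b)(1+i)$) one has $\Re(u_b b)=\ell_\infty(b)$ and $|\Im(u_b b)|=m(b)$; and since $v_2(2^{(n+1)/2})=\tfrac{n+1}{2}>v_2(b)=v_2(u_b b)$, it follows that $z\neq 0$, $v_2(z)=v_2(b)$, $|\Im(z)|=m(b)$, and $\Re(z)=\ell_\infty(b)-2^{(n+1)/2}$. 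By Corollary \ref{coro_strictly_less}, $2^{(n+1)/2}=\ell_1(r)<\ell_\infty(b)$, so $\Re(z)>0$; substituting $\ell_\infty(b)\le w_n-2^{v_2(b)+1}=2^{(n+3)/2}-2^{v_2(b)+1}$ gives $\Re(z)\le 2^{(n+1)/2}-2^{v_2(b)+1}\le w_{n-1}-2^{v_2(b)+1}$ (as $2^{(n+1)/2}\le w_{n-1}$), while $m(b)\le w_{n-1}-2^{v_2(b)+1}$ by Corollary \ref{coro_fcn_bounds}; together these give $\ell_\infty(z)\le w_{n-1}-2^{v_2(b)+1}$. Finally $\ell_1(z)=\ell_1(b)-2^{(n+1)/2}\le\bigl(w_{n+1}-3\cdot 2^{v_2(b)}\bigr)-2^{(n+1)/2}=w_n-3\cdot 2^{v_2(b)}$. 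Applying Corollary \ref{coro_fcn_bounds} to $z$ with $n$ replaced by $n-1$ (using $v_2(z)=v_2(b)$) yields $\phi_{\Z[i]}(z)\le n-1<n=\phi_{\Z[i]}(b)$, as required.

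The key realization --- really the whole point --- is that $2^{v_2(r)}\nmid w_{n-1}$ is strong enough to pin down $\ell_\infty(r)$ outright, so that $r$ is essentially $2^{(n+1)/2}$ and everything after is routine arithmetic with $\{w_m\}$. The spots that need care are the parity split on $n$ (with the even case turning out vacuous), the degenerate case $\ell_\infty(b)=m(b)$ in the definition of $u_b$, and confirming $v_2(b)<v_2(r)$ so that $v_2(z)=v_2(b)$ in the final application of Corollary \ref{coro_fcn_bounds}.
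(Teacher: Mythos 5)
Your proposal is correct and follows essentially the same route as the paper: use $2^{v_2(r)}\nmid w_{n-1}$ together with Lemma \ref{bounds_lemma} to pin down $\ell_\infty(r)=2^{v_2(r)}=w_n/2$ and $m(r)=0$, then bound $\ell_\infty$ and $\ell_1$ of $u_bb-u_rr$ and invoke Corollary \ref{coro_fcn_bounds}. The only difference is presentational: you make explicit the parity split on $n$ (with the even case vacuous) that the paper compresses into the single assertion $w_n=2^{v_2(r)+1}$.
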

\begin{proof}
%Lemma \ref{bounds_lemma} says  $2^{v_2(r)} \leq \ell_{\infty}(r) < w_{n-1}$.
Since $2^{v_2(r)} \nmid w_{n-1}$,  and lemma \ref{bounds_lemma} implies $2^{v_2(r)} \leq \ell_{\infty}(r) < w_{n-1}$, we infer $w_n = 2^{v_2(r) +1}$. % by the sequence $\{w_m \}$'s definition.
%The same lemma 
It also shows $2^{v_2(r)} \leq \ell_{\infty}(r) \leq  \ell_1(r) < w_n = 2^{v_2(r) +1}$, 
forcing $ \ell_{\infty}(r) = \ell_1(r)$ and $m(r) =0$.
This means 
\begin{align*}
u_b b - u_r r &\in \{ (\ell_{\infty}(b) - \ell_{\infty}(r)) \pm m(b) i\},\\
\intertext{so}
\ell_{\infty}(u_b b - u_r r) &\leq \max (\ell_{\infty}(b) - \ell_{\infty}(r), m(b))\\
&\leq ( w_n - 2^{v_2(b) + 1} - 2^{v_2(r)}, w_{n-1} - 2^{v_2(b) +1})\\
&\leq w_{n-1} - 2^{v_2(b) +1}\\
\intertext{by Lemma \ref{2_val_lemma} and}
\ell_1(u_b b - u_r r) &= \ell_1(b) - \ell_{\infty}(r)\\
&\leq w_{n+1} - 3 \cdot 2^{v_2(b)} - 2^{v_2(r)}\\
&= w_n - 3 \cdot 2^{v_2(b)}.
\end{align*}
Since %$2^{v_2(b)} < 2^{v_2(r)}$, 
$v_2(u_b b - u_r r) = v_2(b)$, we deduce
 $\phi_{\Z[i]} \left ( r - \frac{u_b}{u_r} b \right ) = \phi_{\Z[i]}( u_b b - u_r r) < \phi_{\Z[i]} (b)$.
\end{proof}

%Understanding when $\phi_{\Z[i]}(r) \geq \phi_{\Z[i]}(b)$ and $2^{v_2(r)} \nmid w_{n-1}$,
%we turn to when  $2^{v_2(r)} | w_{n-1}$. 
Since we now understand what happens when $\phi_{\Z[i]}(r) \geq \phi_{\Z[i]}(b)$ and $2^{v_2(r)} \nmid w_{n-1}$, 
we want to study the possibilities when $2^{v_2(r)} | w_{n-1}$.  

\begin{lemma}\label{lemma_remainder_properties} Suppose  $a, b \in \Z[i] \setminus \{0 \}$ have Gauss remainder $r \neq 0$,
that $\phi_{\Z[i]}(r) \geq \phi_{\Z[i]}(b) =n$, and that $2^{v_2(r)} | w_{n-1}$.
Then $2^{v_2(r)}| w_m$ for all $m \geq n-1$ and either 
\begin{align*}
\ell_{\infty}(r) = w_{n-1} - 2^{v_2(r)} &\text{ or } \ell_1(r) \geq w_n - 2^{v_2(r) +1}.\\
%In the second case, $\ell_{\infty}(r) \geq w_{n-2} - 2^{v_2(r)}$.
\intertext{In both cases,}
\ell_{\infty}(r) \geq w_n - w_{n-1} &\text{ and } \ell_1(r) \geq w_{n-1} - 2^{v_2(r)}.
\end{align*}
If $\ell_{\infty}(r) \neq w_{n-1} - 2^{v_2(r)}$, 
then $m(r) \geq w_n - w_{n-1}$.
\end{lemma}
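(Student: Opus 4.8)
The plan is to exploit the bounds from Lemma \ref{bounds_lemma} and Corollary \ref{coro_strictly_less} together with the hypothesis $\phi_{\Z[i]}(r) \geq \phi_{\Z[i]}(b) = n$, which via Theorem \ref{formula} / Corollary \ref{coro_fcn_bounds} forces $r$ to violate at least one of the inequalities defining $\phi_{\Z[i]}(r) \leq n-1$: i.e. either $\ell_\infty(r) > w_{n-1} - 2^{v_2(r)+1}$ or $\ell_1(r) > w_n - 3\cdot 2^{v_2(r)}$. First I would record that since $2^{v_2(r)} \mid w_{n-1}$ and $w_{m+2} = 2w_m$, the divisibility $2^{v_2(r)} \mid w_m$ propagates to all $m \geq n-1$; this is the "both cases" structural fact and is immediate from the stated properties of $\{w_m\}$. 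Given this divisibility, $\ell_\infty(r) > w_{n-1} - 2^{v_2(r)+1}$ combined with $2^{v_2(r)} \mid \ell_\infty(r)$ (as $2^{v_2(r)} \mid r$) upgrades to $\ell_\infty(r) \geq w_{n-1} - 2^{v_2(r)}$; but Lemma \ref{bounds_lemma} gives $\ell_\infty(r) < w_{n-1}$, and again using $2^{v_2(r)} \mid \ell_\infty(r)$ this means $\ell_\infty(r) \leq w_{n-1} - 2^{v_2(r)}$, so in fact $\ell_\infty(r) = w_{n-1} - 2^{v_2(r)}$. Similarly, in the other case $\ell_1(r) > w_n - 3\cdot 2^{v_2(r)}$ together with $2^{v_2(r)} \mid \ell_1(r)$ (note $v_{1+i}(r) \geq 2v_2(r) \geq 1$ here since $2^{v_2(r)} \mid w_{n-1}$ and $n \geq 1$, using the parity observations from the Background section — I should double-check this divisibility of $\ell_1$ carefully) yields $\ell_1(r) \geq w_n - 2^{v_2(r)+1}$. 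That establishes the dichotomy.

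Next I would derive the uniform lower bounds $\ell_\infty(r) \geq w_n - w_{n-1}$ and $\ell_1(r) \geq w_{n-1} - 2^{v_2(r)}$ valid in both branches. In the first branch, $\ell_\infty(r) = w_{n-1} - 2^{v_2(r)}$ and $\ell_1(r) \geq \ell_\infty(r) = w_{n-1} - 2^{v_2(r)}$, so the $\ell_1$ bound is free; for the $\ell_\infty$ bound I need $w_{n-1} - 2^{v_2(r)} \geq w_n - w_{n-1}$, i.e. $2w_{n-1} - w_n \geq 2^{v_2(r)}$, which by the explicit formula for $w_m$ is either $w_{n-2} \geq 2^{v_2(r)}$ (when $n$ even, using $w_n = 2w_{n-2}$) or a similar elementary inequality when $n$ odd — here I use $2^{v_2(r)} \leq \ell_\infty(r) < w_{n-1}$, so $2^{v_2(r)}$ is small relative to the $w$'s, and I expect the inequality to hold with room to spare; this is the one spot needing a short case check on the parity of $n$. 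In the second branch, $\ell_1(r) \geq w_n - 2^{v_2(r)+1} \geq w_{n-1} - 2^{v_2(r)}$ follows since $w_n \geq w_{n-1}$ and $2^{v_2(r)+1} \leq$ (something controllable), and for $\ell_\infty(r)$ I combine $\ell_1(r) \geq w_n - 2^{v_2(r)+1}$ with the Gauss-remainder bound $m(r) \leq \ell_\infty(r)$ to get $\ell_\infty(r) \geq \ell_1(r)/2 \geq (w_n - 2^{v_2(r)+1})/2$; comparing this to $w_n - w_{n-1}$ reduces to $w_{n-1} \geq (w_n + 2^{v_2(r)+1})/2 = w_{n-2}\cdot(\text{const}) + 2^{v_2(r)}$, again an elementary $w$-inequality.

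Finally, for the last assertion — if $\ell_\infty(r) \neq w_{n-1} - 2^{v_2(r)}$ then $m(r) \geq w_n - w_{n-1}$ — I would observe that this hypothesis puts us squarely in the second branch, so $\ell_1(r) \geq w_n - 2^{v_2(r)+1}$, whence $m(r) = \ell_1(r) - \ell_\infty(r) \geq (w_n - 2^{v_2(r)+1}) - \ell_\infty(r)$; now I invoke Lemma \ref{bounds_lemma} in the sharp form $\ell_\infty(r) \leq \ell_1(b)/2 < w_{n-1}$ — actually better, I use that $2^{v_2(r)} \mid \ell_\infty(r)$ and $\ell_\infty(r) < w_{n-1}$ to get $\ell_\infty(r) \leq w_{n-1} - 2^{v_2(r)}$, and then $m(r) \geq w_n - 2^{v_2(r)+1} - (w_{n-1} - 2^{v_2(r)}) = w_n - w_{n-1} - 2^{v_2(r)}$, which is off by $2^{v_2(r)}$; to close this gap I would instead use parity — since $\ell_\infty(r) \neq w_{n-1} - 2^{v_2(r)}$ and $\ell_\infty(r)$ is a multiple of $2^{v_2(r)}$ strictly below $w_{n-1}$, in fact $\ell_\infty(r) \leq w_{n-1} - 2^{v_2(r)+1}$ unless... this is the delicate point. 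The main obstacle is precisely this final sharpening: getting the clean bound $m(r) \geq w_n - w_{n-1}$ rather than a version weakened by a $2^{v_2(r)}$ term, which I expect requires combining the divisibility of $\ell_\infty(r)$ by $2^{v_2(r)}$ with a careful look at when $2^{v_2(r)+1} \mid \ell_\infty(r)$ fails, tied back to the parity observations (Background, on $v_{1+i}(r)$ and the oddity of $\ell_\infty - m$) — so I would handle the cases $2^{v_2(r)+1} \mid \ell_\infty(r)$ and $2^{v_2(r)+1} \nmid \ell_\infty(r)$ separately, the latter forcing extra structure on $\ell_1(r)$ via the second-branch inequality being strict.
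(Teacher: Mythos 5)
Your strategy is the same as the paper's: invoke the second sentence of Corollary \ref{coro_fcn_bounds} (since $\phi_{\Z[i]}(r)\geq n$, either $\ell_{\infty}(r) > w_{n-1}-2^{v_2(r)+1}$ or $\ell_1(r) > w_n - 3\cdot 2^{v_2(r)}$), and sharpen each alternative using the fact that $2^{v_2(r)}$ divides $\ell_{\infty}(r)$, $\ell_1(r)$, $w_{n-1}$ and $w_n$, together with the bound $2^{v_2(r)}\leq\ell_{\infty}(r)<w_{n-1}$ from Lemma \ref{bounds_lemma}. The divisibility propagation and the dichotomy are established correctly, and your worry about $2^{v_2(r)}\mid\ell_1(r)$ is unfounded: $2^{v_2(r)}$ divides $|\Re(r)|$ and $|\Im(r)|$, hence their sum, so the digression through $v_{1+i}$ is unnecessary.

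The genuine gap is the final claim, which you explicitly leave unresolved, and you are one line away from it. If $\ell_{\infty}(r)\neq w_{n-1}-2^{v_2(r)}$, then, since $\ell_{\infty}(r)$ and $w_{n-1}$ are both multiples of $2^{v_2(r)}$ and $\ell_{\infty}(r)<w_{n-1}$, the only excluded value is the largest admissible one, so $\ell_{\infty}(r)\leq w_{n-1}-2\cdot 2^{v_2(r)} = w_{n-1}-2^{v_2(r)+1}$ unconditionally --- there is no ``unless,'' and no case split on whether $2^{v_2(r)+1}\mid\ell_{\infty}(r)$ is needed. Your error was to substitute the weaker bound $\ell_{\infty}(r)\leq w_{n-1}-2^{v_2(r)}$ into $m(r)=\ell_1(r)-\ell_{\infty}(r)\geq (w_n-2^{v_2(r)+1})-\ell_{\infty}(r)$, which is why you came up short by $2^{v_2(r)}$; substituting $\ell_{\infty}(r)\leq w_{n-1}-2^{v_2(r)+1}$ instead makes the $2^{v_2(r)+1}$ terms cancel and gives $m(r)\geq w_n-w_{n-1}$ exactly. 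This is precisely the paper's computation. The same observation repairs the other soft spot in your argument: to get $\ell_{\infty}(r)\geq w_n-w_{n-1}$ when $\ell_{\infty}(r)\neq w_{n-1}-2^{v_2(r)}$, your route via $\ell_{\infty}(r)\geq \ell_1(r)/2\geq (w_n-2^{v_2(r)+1})/2$ does not reach $w_n-w_{n-1}$ in the parity case $2^{v_2(r)+1}=w_{n-1}$ (it is only rescued there by the separate inequality $\ell_{\infty}(r)\geq 2^{v_2(r)}$); the clean route is simply $\ell_{\infty}(r)\geq m(r)\geq w_n-w_{n-1}$ from the cancellation above.
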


\begin{proof} Lemma \ref{bounds_lemma} shows $2^{v_2(r)} \leq \ell_{\infty}(r) < w_{n-1}$, so %our assumption implies 
$\ell_{\infty}(r) \leq w_{n-1} - 2^{v_2(r)}$ and $2^{v_2(r) + 1 }\leq w_{n-1}$.  
It follows that $2^{v_2(r)} | w_m$ for all $m \geq n-1$, and thus Corollary \ref{coro_fcn_bounds} 
implies either $\ell_{\infty}(r) =  w_{n-1} - 2^{v_2(r) }$ or $\ell_1(r) \geq w_n -2^{v_2(r) +1}$.
%In the first scenario, our earlier bounds force equality, so $\ell_{\infty}(r) = w_{n-1} - 2^{v_2(r)}$.
%In the second scenario, we see that $2 \ell_{\infty}(r) \geq \ell_1(r) \geq w_n - 2^{v_2(r) +1}$ 
%is equivalent to $\ell_{\infty}(r) \geq w_{n-2} - 2^{v_2(r)}$.

If $\ell_{\infty(r)} \leq w_{n-1} - 2^{v_2(r) +1}$, then 
$\ell_1(r) =\ell_{\infty}(r) + m(r) \geq w_n - 2^{v_2(r) +1}$
implies
\[m(r) \geq w_n - 2^{v_2(r) + 1} - (w_{n-1} -2^{v_2(r)+1}) = w_n - w_{n-1}.\]
Note that since $2^{v_2(r)}$ divides both $w_{n-1}$ and $w_n$,
%$w_n - 2^{v_2(r)} \geq w_{n-1}$ and 
$w_n - 2^{v_2(r) +1} \geq w_{n-1} - 2^{v_2(r)}$,
and thus $\ell_1(r) \geq w_n - 2^{v_2(r) +1}$, no matter which of our two conditions hold.
Similarly, %$w_{n+1} - 2^{v_2(r)} \geq w_n$ implies 
$w_{n-1} - 2^{v_2(r)} \geq w_n - w_{n-1}$ and thus 
$\ell_{\infty}(r) \geq w_n - w_{n-1}$ in both situations.
\end{proof}

\begin{coro}\label{coro_big_diff}
If $a, b \in \Z[i] \setminus \{0 \}$ have non-zero Gauss remainder $r$,
if $\phi_{\Z[i]}(r) \geq \phi_{\Z[i]}(b) =n$, and if $2^{v_2(r)} | w_{n-1}$,
then $m(r), \ell_{\infty}(r), \ell_{\infty}(b) - \ell_{\infty}(r) \leq w_{n-1} - 2^{v_2(b) +1}$.
\end{coro}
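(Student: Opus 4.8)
The plan is to assemble three facts already established in the excerpt: the two-valuation constraint of Lemma \ref{2_val_lemma}, the size bounds of Lemma \ref{bounds_lemma}, and the structural inequalities of Lemma \ref{lemma_remainder_properties}. Since $\phi_{\Z[i]}(r) \geq \phi_{\Z[i]}(b) = n$, the contrapositive of Lemma \ref{2_val_lemma} forces $v_2(r) > v_2(b)$, hence $2^{v_2(r)} \geq 2^{v_2(b)+1}$. This single observation is what converts all of the $2^{v_2(r)}$-shaped bounds in Lemma \ref{lemma_remainder_properties} into $2^{v_2(b)+1}$-shaped bounds.

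First I would bound $\ell_{\infty}(r)$. Because $2^{v_2(r)} \mid r$ in $\Z[i]$, it divides $\Re(r)$ and $\Im(r)$, and therefore divides $\ell_{\infty}(r)$ and $m(r)$. By Lemma \ref{bounds_lemma} we have $\ell_{\infty}(r) < w_{n-1}$, and the hypothesis gives $2^{v_2(r)} \mid w_{n-1}$, so $w_{n-1} - \ell_{\infty}(r)$ is a positive multiple of $2^{v_2(r)}$; hence
\[
\ell_{\infty}(r) \leq w_{n-1} - 2^{v_2(r)} \leq w_{n-1} - 2^{v_2(b)+1}.
\]
The bound on $m(r)$ is then immediate, since $m(r) \leq \ell_{\infty}(r)$.

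For the last quantity I would use that $\phi_{\Z[i]}(b) = n$ gives $\ell_{\infty}(b) \leq w_n - 2^{v_2(b)+1}$ (Corollary \ref{coro_fcn_bounds}), together with the lower bound $\ell_{\infty}(r) \geq w_n - w_{n-1}$ supplied by Lemma \ref{lemma_remainder_properties}, valid under exactly the present hypotheses. Subtracting,
\[
\ell_{\infty}(b) - \ell_{\infty}(r) \leq \bigl(w_n - 2^{v_2(b)+1}\bigr) - \bigl(w_n - w_{n-1}\bigr) = w_{n-1} - 2^{v_2(b)+1},
\]
which closes the argument. There is no genuine obstacle: the corollary is a repackaging of earlier lemmas, and the only points needing a moment's care are the divisibility remark $2^{v_2(r)} \mid \ell_{\infty}(r)$ that upgrades the strict inequality $\ell_{\infty}(r) < w_{n-1}$ to $\ell_{\infty}(r) \leq w_{n-1} - 2^{v_2(r)}$, and checking that the lower bound $\ell_{\infty}(r) \geq w_n - w_{n-1}$ from Lemma \ref{lemma_remainder_properties} holds in both of the alternative cases recorded there (it does, as noted in that lemma's statement).
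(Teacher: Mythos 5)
Your proof is correct and follows essentially the same route as the paper: Lemma \ref{2_val_lemma} gives $v_2(r) > v_2(b)$, the divisibility of $\ell_{\infty}(r)$ and $w_{n-1}$ by $2^{v_2(r)}$ upgrades $\ell_{\infty}(r) < w_{n-1}$ to $\ell_{\infty}(r) \leq w_{n-1} - 2^{v_2(r)}$, and the lower bound $\ell_{\infty}(r) \geq w_n - w_{n-1}$ from Lemma \ref{lemma_remainder_properties} handles $\ell_{\infty}(b) - \ell_{\infty}(r)$. The only (harmless) difference is that you invoke the unconditional lower bound directly where the paper takes a maximum over the two cases of that lemma, which yields the same value.
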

\begin{proof} 
Lemma \ref{2_val_lemma} implies $v_2(r) > v_2(b)$, so Lemma \ref{lemma_remainder_properties} shows 
$m(r), \ell_{\infty}(r) \leq w_{n-1} - 2^{v_2(r)} \leq w_{n-1} - 2^{v_2(b)+1}$.
Since either 
$\ell_{\infty}(r) = w_{n-1} - 2^{v_2(r)}$ or $\ell_{\infty}(r)  \geq w_n - w_{n-1}$, 
\begin{align*}
\ell_{\infty}(b) - \ell_{\infty}(r) &\leq \max(\ell_{\infty}(b) - w_{n-1} + 2^{v_2(r)}, \ell_{\infty}(b) -w_n  + w_{n-1})\\
&\leq \max(w_n - 2^{v_2(b) +1} - w_{n-1} + (w_{n+1} - w_n), w_n - 2^{v_2(b) +1} - w_n + w_{n-1})\\
&=w_{n-1} - 2^{v_2(b) + 1}.
%\intertext{Since $v_2(r) > v_2(b)$,}
%m(r)& \leq \ell_{\infty}(r) \leq w_{n-1} - 2^{v_2(r)} \leq w_{n-1} - 2^{v_2(b) +1}.
\end{align*}

\end{proof}

\subsection{When imaginary parts align}

Determining an alternate remainder is fairly straightforward when $\Im(u_b b) \Im(u_r r) \geq 0$, i.e., when $u_b b$ and $u_r r$ lie in the same quadrant.  
The next section shows that things become much more complicated when they do not.

\begin{prop}\label{im_align}  Suppose $a, b \in \Z[i] \setminus \{0\}$ have non-zero Gauss remainder $r$.
If $\phi_{\Z[i]}(r) \geq \phi_{\Z[i]}(b)=n$ and $\Im(u_b b) \Im (u_r r) \geq0$ then 
$\phi_{\Z[i]} \left ( r - \frac{u_b}{u_r} b \right ) < \phi_{\Z[i]}(b)$.
\end{prop}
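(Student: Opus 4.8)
The plan is to work with $z := u_r r - u_b b$; since $\phi_{\Z[i]}$ is unit-invariant, $\phi_{\Z[i]}\bigl(r - \tfrac{u_b}{u_r}b\bigr) = \phi_{\Z[i]}(z)$, so it suffices to show $\phi_{\Z[i]}(z) \le n-1$ by producing the bounds demanded by Corollary \ref{coro_fcn_bounds}. First I would dispose of the case $2^{v_2(r)} \nmid w_{n-1}$: there Lemma \ref{lemma_2_val_nmid} already gives the conclusion (without even using the hypothesis on imaginary parts), so from now on assume $2^{v_2(r)} \mid w_{n-1}$.

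Next I would read off the coordinates of $z$. Because $\Re(u_b b) = \ell_\infty(b)$, $|\Im(u_b b)| = m(b)$, and likewise for $r$, the hypothesis $\Im(u_b b)\,\Im(u_r r) \ge 0$ means the two imaginary parts share a sign, so $|\Re z| = \ell_\infty(b) - \ell_\infty(r)$ and $|\Im z| = |m(b) - m(r)|$; the essential gain is that the imaginary parts \emph{subtract} rather than add. Corollary \ref{coro_strictly_less} gives $\ell_\infty(r) \le \ell_1(r) < \ell_\infty(b)$, so $|\Re z|>0$ and $z\ne 0$; Lemma \ref{2_val_lemma} gives $v_2(r) > v_2(b)$, so by ultrametricity of the $2$-adic valuation the subtraction does not lower it and $v_2(z) = v_2(b)$.

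Then come the two estimates. For $\ell_\infty(z) = \max(|\Re z|,|\Im z|)$: Corollary \ref{coro_big_diff} bounds $\ell_\infty(b) - \ell_\infty(r)$ and $m(r)$ by $w_{n-1} - 2^{v_2(b)+1}$, while the preliminary remark that $\phi_{\Z[i]}(b) = n$ forces $m(b) \le w_{n-1} - 2^{v_2(b)+1}$ handles the other term, so $\ell_\infty(z) \le w_{n-1} - 2^{v_2(b)+1} = w_{n-1} - 2^{v_2(z)+1}$. For $\ell_1(z) = (\ell_\infty(b) - \ell_\infty(r)) + |m(b) - m(r)|$ I split on the sign of $m(r)-m(b)$. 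If $m(r) \ge m(b)$, then $\ell_1(z) = (\ell_\infty(b) - m(b)) - (\ell_\infty(r) - m(r)) \le \ell_\infty(b) - m(b) \le w_n - 3\cdot 2^{v_2(b)}$ by Lemma \ref{diff_is_less}. If $m(r) < m(b)$, then $\ell_1(z) = \ell_1(b) - \ell_1(r) \le (w_{n+1} - 3\cdot 2^{v_2(b)}) - (w_{n-1} - 2^{v_2(r)})$ by Lemma \ref{lemma_remainder_properties}; using $w_{n+1} = 2w_{n-1}$ this equals $w_{n-1} + 2^{v_2(r)} - 3\cdot 2^{v_2(b)}$, and since $2^{v_2(r)} \le \ell_\infty(r) < w_{n-1}$ the divisibility property of $\{w_m\}$ forces $2^{v_2(r)}\mid w_n$, hence (as $2^{v_2(r)}\mid w_{n-1}$ and $w_{n-1}<w_n$) $2^{v_2(r)} \le w_n - w_{n-1}$, giving $\ell_1(z) \le w_n - 3\cdot 2^{v_2(b)}$. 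With both bounds in hand, Corollary \ref{coro_fcn_bounds} applied with $n$ replaced by $n-1$ yields $\phi_{\Z[i]}(z) \le n-1 < \phi_{\Z[i]}(b)$.

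The one delicate point — and the only place the hypothesis on imaginary parts really earns its keep — is the $\ell_1(z)$ bound when $m(r) < m(b)$: the obvious estimate overshoots $w_n - 3\cdot 2^{v_2(b)}$ by precisely $2^{v_2(r)}$, and one must notice that $2^{v_2(r)}$ divides $w_n - w_{n-1}$ to soak it up. The rest is bookkeeping with inequalities already established for Gauss remainders.
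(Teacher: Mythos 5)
Your proposal is correct and follows essentially the same route as the paper: reduce to $u_b b - u_r r$, dispose of the $2^{v_2(r)} \nmid w_{n-1}$ case via Lemma \ref{lemma_2_val_nmid}, note that the aligned imaginary parts make $|\Im|$ equal to $|m(b)-m(r)|$, and then verify the two bounds of Corollary \ref{coro_fcn_bounds} at level $n-1$ — your case split on the sign of $m(r)-m(b)$ is exactly the paper's $\max(\ell_1(b)-\ell_1(r),\ \ell_\infty(b)-m(b)+m(r)-\ell_\infty(r))$, and your observation that $2^{v_2(r)} \le w_n - w_{n-1}$ absorbs the overshoot is the same step the paper uses. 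Your version is, if anything, slightly more careful in checking $z \neq 0$ and in attributing the bound on $m(b)$ to the preliminary remark rather than to Corollary \ref{coro_big_diff}.
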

\begin{proof}
Lemma \ref{lemma_2_val_nmid} %proves our claim when $2^{v_2(r)} \nmid w_{n-1}$, so we 
lets us assume %that $2^{v_2(r)} | w_{n-1}$ and thus 
$2^{v_2(r)} |w_m$ for all $m \geq n-1$, so
%Corollary \ref{1+i_val_coro} shows that $v_{1+i}(r) \geq v_{1+i}(b) +2$, so Proposition \ref{remainder_zr} allows us to assume without loss of generality that $v_{1+}(b) =0$.
%Our assumptions imply 
\begin{align*}
u_b b - u_r r &\in \{(\ell_{\infty}(b) - \ell_{\infty}(r)) \pm (m(b) - m(r))i \},\\
\intertext{and thus}
\ell_{\infty}(u_b b - u_r r) &\leq \max (\ell_{\infty}(b) - \ell_{\infty}(r), m(b), m(r)) \leq w_{n-1} -2^{v_2(b) +1}
\intertext{by Corollary \ref{coro_big_diff}.  Observe that}
\ell_1(u_b b - u_r r) &= \max(\ell_1(b) - \ell_1(r), \ell_{\infty}(b) - m(b) + m(r) - \ell_{\infty}(r))\\
&\leq \max(w_{n+1} - 3 \cdot 2^{v_2(b)} - (w_{n-1} - 2^{v_2(r)}), \ell_{\infty}(b) - m(b))\\
&\leq \max(w_{n-1} -3\cdot 2^{v_2(b)} + (w_n - w_{n-1}) , w_n -3\cdot 2^{v_2(b)}) \\
&= w_n -3\cdot 2^{v_2(b)}.
\end{align*}
Because $v_2(u_b b - u_r r) = v_2(b)$, we conclude
$\phi_{\Z[i]} \left ( r - \frac{u_b}{u_r} b \right ) %= \phi_{\Z[i]} (u_b b - u_r r) 
< \phi_{\Z[i]}(b)$.
\end{proof}

\section{Proving our Main Result}

When $\Im(u_b b)$ and $\Im(u_r r)$ have opposite signs, finding alternate remainders becomes more complicated.
It only partially depends on how $r$ relates to $m(b)$.
%and part of it depends on the sign of $\Im(u_r r)$, necessitating the use of the function $s(r) = \text{sgn}(\Im(u_r r))$.  
In this section, we construct alternate remainders for when $\phi_{\Z[i]}(r) \geq \phi_{\Z[i]}(b)$ and\\
 $\Im(u_b b) \Im(u_r r) <0$, allowing us to prove Theorem \ref{main_result}.

\begin{lemma}\label{twisted}
Suppose $a,b \in \Z[i] \setminus \{0\}$ have non-zero Gauss remainder $r$.
If $\phi_{\Z[i]}(r) \geq \phi_{\Z[i]}(b) =n$ and $\Im(u_b b) \Im(u_r r) <0$,
then 
\begin{align*}
u_b b -u_r r &\in \{ (\ell_{\infty}(b) - \ell_{\infty}(r)) \pm (m(b) + m(r))i \} \\
\intertext{and}
u_b b + s(r) iu_r r &\in \{ (\ell_{\infty}(b) - m(r) ) \pm (m(b) - \ell_{\infty}(r)) i \}.
\end{align*}
\end{lemma}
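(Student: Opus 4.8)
The plan is to reduce the statement to a one-line coordinate computation after normalizing $b$ and $r$ by their units. First I would note that the hypothesis $\Im(u_b b)\,\Im(u_r r) < 0$ is incompatible with $m(b) = 0$ or $m(r) = 0$ (either would make an imaginary part vanish), so both minima are strictly positive. The key preliminary observation is a uniform normalization: for $z \in \C\setminus\{0\}$ with $m(z) > 0$, unit-invariance of the norm forces
\[
u_z z = \ell_\infty(z) + \epsilon_z\, m(z)\, i, \qquad \epsilon_z \in \{\pm 1\};
\]
when $\ell_\infty(z) \neq m(z)$ this is because $\Re(u_z z) = \ell_\infty(z)$ and $\Im(u_z z)^2 = \Nm(z) - \ell_\infty(z)^2 = m(z)^2$, and when $\ell_\infty(z) = m(z)$ it is immediate from $u_z z = \ell_\infty(z)(1+i)$, where $\epsilon_z = +1$. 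In particular $s(r) = \text{sgn}\big(\Im(u_r r)\big) = \epsilon_r$ since $m(r) > 0$.

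Then, writing $u_b b = \ell_\infty(b) + \epsilon_b m(b) i$ and $u_r r = \ell_\infty(r) + \epsilon_r m(r) i$, the hypothesis says precisely $\epsilon_b\epsilon_r = -1$; set $\epsilon := \epsilon_b = -\epsilon_r$. The first claim then reads
\[
u_b b - u_r r = \big(\ell_\infty(b) - \ell_\infty(r)\big) + \epsilon\,\big(m(b) + m(r)\big)\, i,
\]
and for the second I would compute $s(r)\, i\, u_r r = \epsilon_r i\big(\ell_\infty(r) + \epsilon_r m(r) i\big) = -m(r) + \epsilon_r \ell_\infty(r)\, i$ and add it to $u_b b$, obtaining
\[
u_b b + s(r)\, i\, u_r r = \big(\ell_\infty(b) - m(r)\big) + \epsilon\,\big(m(b) - \ell_\infty(r)\big)\, i.
\]
Since $\epsilon = \pm 1$, both complex numbers lie in the claimed two-element sets.

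I do not expect a genuine obstacle: this lemma is bookkeeping that sets up the case analysis for Theorem \ref{main_result}. The two points needing care are (i) deriving the uniform normalization $u_z z = \ell_\infty(z) \pm m(z)\, i$ across both branches of the definition of $u_z$, and (ii) checking that the sign hypothesis genuinely forces $\epsilon_b = -\epsilon_r$ (hence $s(r) = -\epsilon$) — in particular it rules out $b$ and $r$ both lying on the diagonal $\ell_\infty = m$, since two such elements would have imaginary parts of the same positive sign.
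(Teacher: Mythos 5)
Your proposal is correct and follows essentially the same route as the paper: normalize to $u_b b = \ell_\infty(b) \pm m(b)i$, $u_r r = \ell_\infty(r) \mp m(r)i$ and compute directly, with the sign hypothesis forcing the opposite choices of sign. Your version is slightly more careful than the paper's (you justify the uniform normalization across both branches of the definition of $u_z$ and note that $m(b), m(r) > 0$, which the paper uses implicitly when evaluating $s(r)$), but the substance is identical.
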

\begin{proof} 
The first part follows from the definitions. 
If $u_b b = \ell_{\infty}(b) \pm m(b) i$ and $u_r r = \ell_{\infty}(r) \mp m(r) i$, then 
$iu_r r = \pm m(r) + \ell_{\infty}(r) i$ and thus $s(r)iu_r r = - m(r) \mp \ell_{\infty}(r) i$.
We conclude that $u_b b + s(r) i u_r r = (\ell_{\infty}(b) -m(r)) \pm (m(b) -\ell_{\infty}(r))i$.
\end{proof}

\begin{lemma}\label{m(b)_leq} Suppose $a,b \in \Z[i]\setminus \{0\}$ have  Gauss remainder $r \neq 0 $.
If $\Im(u_b b) \Im(u_r r) <0$, if $\phi_{\Z[i]}(r) \geq \phi_{\Z[i]}(b) =n$,  and if $m(b) \geq \ell_{\infty}(r)$, then 
$\phi_{\Z[i]}\left ( r - \frac{u_b i}{s(r) u_r} b \right ) < \phi_{\Z[i]}(b)$.  
\end{lemma}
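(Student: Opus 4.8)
The plan is to reduce the claim to the numerical test of Corollary~\ref{coro_fcn_bounds}. Write $R = r - \frac{u_b i}{s(r)u_r}b$ and put $w = u_b b + s(r)i\,u_r r$. Since $\Im(u_b b)\Im(u_r r)<0$ forces $\Im(u_r r)\neq 0$, the value $s(r)$ is $\pm 1$, so $R$ makes sense, and moreover $m(r)\neq 0$; as $2^{v_2(r)}$ divides both $\Re(r)$ and $\Im(r)$, this gives $m(r)\ge 2^{v_2(r)}$. The contrapositive of Lemma~\ref{lemma_2_val_nmid} then yields $2^{v_2(r)}\mid w_{n-1}$, and Lemma~\ref{2_val_lemma} gives $v_2(r)>v_2(b)$. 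A short units calculation shows $s(r)u_r R = -iw$, so $\phi_{\Z[i]}(R)=\phi_{\Z[i]}(w)$, and $v_2(w)=v_2(b)$ since $v_2(u_b b)=v_2(b)<v_2(r)=v_2(s(r)i\,u_r r)$. Thus it suffices to prove $\phi_{\Z[i]}(w)\le n-1$.

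Next I would determine the coordinates of $w$. By Lemma~\ref{twisted}, $w\in\{(\ell_\infty(b)-m(r))\pm(m(b)-\ell_\infty(r))i\}$. The hypothesis $m(b)\ge\ell_\infty(r)$ together with $\ell_\infty(r)\ge m(r)$ gives $\ell_\infty(b)\ge m(b)\ge\ell_\infty(r)\ge m(r)\ge 0$; since in addition $(\ell_\infty(b)-m(b))+(\ell_\infty(r)-m(r))\ge 0$, the real coordinate of $w$ is at least the imaginary one in absolute value, so
\[
\ell_\infty(w)=\ell_\infty(b)-m(r),\qquad m(w)=m(b)-\ell_\infty(r),\qquad \ell_1(w)=\ell_1(b)-\ell_1(r).
\]
Applying Corollary~\ref{coro_fcn_bounds} to $w$ with parameter $n-1$ (recall $v_2(w)=v_2(b)$), it now suffices to verify $\ell_\infty(w)\le w_{n-1}-2^{v_2(b)+1}$ and $\ell_1(w)\le w_n-3\cdot 2^{v_2(b)}$. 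I will repeatedly use that $\phi_{\Z[i]}(b)=n$ gives $\ell_\infty(b)\le w_n-2^{v_2(b)+1}$ and $\ell_1(b)\le w_{n+1}-3\cdot 2^{v_2(b)}$, that $w_{n+1}=2w_{n-1}$, and that $2^{v_2(r)}$ divides both $w_{n-1}$ and $w_n$ (by Lemma~\ref{bounds_lemma} and the divisibility property of $\{w_m\}$), hence divides the positive difference $w_n-w_{n-1}$, so $2^{v_2(r)}\le w_n-w_{n-1}$.

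Since $2^{v_2(r)}\mid w_{n-1}$, Lemma~\ref{lemma_remainder_properties} applies, and I would split into two cases. If $\ell_\infty(r)\neq w_{n-1}-2^{v_2(r)}$, that lemma gives $m(r)\ge w_n-w_{n-1}$ and $\ell_1(r)\ge w_n-2^{v_2(r)+1}$, and both desired inequalities follow immediately by subtracting these from the bounds on $\ell_\infty(b)$ and $\ell_1(b)$ and simplifying with $w_{n+1}=2w_{n-1}$ and $2^{v_2(r)}\le w_n-w_{n-1}$. In the remaining case $\ell_\infty(r)=w_{n-1}-2^{v_2(r)}$, I would instead feed the hypothesis $m(b)\ge\ell_\infty(r)$ into the $\ell_1$-bound on $b$: $\ell_\infty(b)=\ell_1(b)-m(b)\le w_{n-1}-3\cdot 2^{v_2(b)}+2^{v_2(r)}$. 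Then $\ell_\infty(w)=\ell_\infty(b)-m(r)\le w_{n-1}-3\cdot 2^{v_2(b)}\le w_{n-1}-2^{v_2(b)+1}$ (using $m(r)\ge 2^{v_2(r)}$), and $\ell_1(w)\le\ell_1(b)-\ell_\infty(r)\le w_{n-1}-3\cdot 2^{v_2(b)}+2^{v_2(r)}\le w_n-3\cdot 2^{v_2(b)}$ (using $2^{v_2(r)}\le w_n-w_{n-1}$). In either case Corollary~\ref{coro_fcn_bounds} gives $\phi_{\Z[i]}(w)\le n-1<n=\phi_{\Z[i]}(b)$, which is the claim.

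The step I expect to be the genuine obstacle is the case $\ell_\infty(r)=w_{n-1}-2^{v_2(r)}$: there $\ell_\infty(r)$ is already essentially maximal, so $m(r)$ can be as small as $2^{v_2(r)}$, and the crude bound $\ell_\infty(b)\le w_n-2^{v_2(b)+1}$ is too weak to control $\ell_\infty(w)=\ell_\infty(b)-m(r)$. Realizing that the hypothesis $m(b)\ge\ell_\infty(r)$ is exactly what lets one trade $m(b)$ for the large quantity $\ell_\infty(r)$ inside $\ell_1(b)=\ell_\infty(b)+m(b)$, thereby pushing $\ell_\infty(b)$ down to roughly $w_{n-1}$, is the crux; the surrounding arithmetic with the $w_m$ should be routine.
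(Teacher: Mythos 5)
Your proposal is correct and follows essentially the same route as the paper's proof: reduce to $u_b b + s(r)iu_r r$, obtain $2^{v_2(r)}\mid w_{n-1}$ from the contrapositive of Lemma \ref{lemma_2_val_nmid}, read off the coordinates via Lemma \ref{twisted} and the hypothesis $m(b)\ge\ell_\infty(r)$, split on whether $\ell_\infty(r)=w_{n-1}-2^{v_2(r)}$ using Lemma \ref{lemma_remainder_properties}, and verify the numerical criterion. The only (harmless) deviation is in the case $\ell_\infty(r)=w_{n-1}-2^{v_2(r)}$, where the paper bounds $\ell_1(r)\ge w_{n-1}$ and deduces both bounds from the single inequality $\ell_1(w)\le \ell_1(b)-w_{n-1}\le w_{n-1}-3\cdot 2^{v_2(b)}$, whereas you first sharpen the bound on $\ell_\infty(b)$; the ingredients and arithmetic are equivalent.
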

\begin{proof} Since $m(r) \neq 0$,  Lemma \ref{lemma_2_val_nmid} shows that $2^{v_2(r)}$ divides $w_m$ for all $m \geq n-1$. 
%Corollalry \ref{1+i_val_coro} and Proposition \ref{remainder_zr} let us assume without loss of generality that $v_{1+i}(b) =0$. 
Since $m(b) \geq \ell_{\infty}(b)$, Lemma \ref{twisted} shows 
\begin{align*}
\ell_1(u_b b + s(r) iu_r r) &= \ell_1(b) - \ell_1(r).\\
\intertext{Lemma \ref{2_val_lemma} shows $v_2(r) >v_2(b)$, so we infer $v_2(u_b b + s(r) i u_r r) = v_2(b)$.  If $\ell_{\infty}(r) = w_{n-1} - 2^{v_2(r)}$, then $\ell_1(r) \geq w_{n-1}$ and}
\ell_1(u_b b + s(r) i u_r r) &\leq \ell_1(b) - w_{n-1} \leq w_{n-1} -3 \cdot 2^{v_2(b)}.
\end{align*}
It follows that $\phi_{\Z[i]}(u_b b + s(r) iu_r r) < n$.

If $\ell_{\infty}(r) \leq w_{n-1} - 2^{v_2(r) +1}$, then 
$\ell_1(r) \geq w_n - 2^{v_2(r) +1},$% and $\ell_{\infty}(r) \geq m(r) \geq w_n - w_{n-1}$.
\begin{align*}
\ell_1(u_b b + s(r)iu_r r) %&= \ell_1(b) - \ell_1(r)\\
&\leq w_{n+1} - 3\cdot 2^{v_2(b)} - w_n + 2^{v_2(r) +1}\\
&\leq (w_{n+1} - w_n) + (w_{n+2} - w_{n+1}) -3\cdot 2^{v_2(b)}\\
&= w_n -3\cdot 2^{v_2(b)},\\
\intertext{and}
\ell_{\infty}(u_b b + s(r)i u_r r) &\leq \max (\ell_{\infty}(b) -  m(r),\ell_{\infty}(r), m(b))\\
&\leq \max(w_n - 2^{v_2(b)+1} - (w_n - w_{n-1}), w_{n-1} -2^{v_2(b)+1} )\\
&= w_{n-1} -2^{v_2(b)+1} ,
\end{align*}
proving $\phi_{\Z[i]} \left ( r - \frac{i u_b}{s(r) u_r} b \right ) = 
\phi_{\Z[i]} ( u_b b + s(r) i u_r r) < \phi_{\Z[i]}(b)$.
\end{proof}

\begin{lemma}\label{m(r)+m(b)_leq}
Suppose $a,b \in \Z[i] \setminus \{0\}$ have Gauss remainder $r \neq 0$.
If $\Im (u_b b)\Im(u_r r) <0$, if $\phi_{\Z[i]}(r) \geq \phi_{\Z[i]}(b) =n $, and if $m(r) + m(b) \leq \ell_{\infty}(r)$,
then $\phi_{\Z[i]}\left ( r - \frac{u_b}{u_r} b\right ) < \phi_{\Z[i]}(b)$.
\end{lemma}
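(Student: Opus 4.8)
The plan is to verify the \emph{untwisted} correction, i.e.\ to show that $\phi_{\Z[i]}\!\big(r-\tfrac{u_b}{u_r}b\big)<n$. Since $r-\tfrac{u_b}{u_r}b=-u_r^{-1}(u_b b-u_r r)$ and $\phi_{\Z[i]}$ is unit-invariant, this amounts to bounding $\phi_{\Z[i]}(u_b b-u_r r)$, and I would bound it using Corollary~\ref{coro_fcn_bounds} after establishing $\ell_\infty(u_b b-u_r r)\le w_{n-1}-2^{v_2(b)+1}$ and $\ell_1(u_b b-u_r r)\le w_n-3\cdot 2^{v_2(b)}$. First I would record the structural consequences of the hypotheses. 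Because $\Im(u_b b)\Im(u_r r)<0$, both imaginary parts are nonzero, so $m(b),m(r)>0$, and since $m(b)\ge 1$ the bound $m(b)+m(r)\le\ell_\infty(r)$ forces $m(r)<\ell_\infty(r)$. As $m(r)\neq 0$, Lemma~\ref{lemma_2_val_nmid} (contrapositive) gives $2^{v_2(r)}\mid w_{n-1}$, and Lemma~\ref{2_val_lemma} gives $v_2(r)>v_2(b)$; hence $v_2(u_b b-u_r r)=v_2(b)$. Moreover $2^{v_2(b)+1}\le 2^{v_2(r)}\le\ell_\infty(r)<w_{n-1}$, so the stated divisibility fact applied to $w_{n-1}$ yields $2^{v_2(b)+1}\mid w_m$ for all $m\ge n$, in particular $2^{v_2(b)+1}\mid w_n$. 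Finally, Lemma~\ref{twisted} writes $u_b b-u_r r$ with real part $\ell_\infty(b)-\ell_\infty(r)$ and imaginary part $\pm(m(b)+m(r))$, and $\ell_\infty(b)>\ell_1(r)\ge\ell_\infty(r)$ by Corollary~\ref{coro_strictly_less}, so $\ell_\infty(u_b b-u_r r)=\max\big(\ell_\infty(b)-\ell_\infty(r),\,m(b)+m(r)\big)$ and $\ell_1(u_b b-u_r r)=\big(\ell_\infty(b)-\ell_\infty(r)\big)+\big(m(b)+m(r)\big)$.

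The $\ell_\infty$ estimate should be routine: Corollary~\ref{coro_big_diff} gives $\ell_\infty(b)-\ell_\infty(r)\le w_{n-1}-2^{v_2(b)+1}$, while the hypothesis together with $\ell_\infty(r)\le w_{n-1}-2^{v_2(r)}$ (from Lemma~\ref{bounds_lemma} and $2^{v_2(r)}\mid w_{n-1}$) gives $m(b)+m(r)\le w_{n-1}-2^{v_2(r)}\le w_{n-1}-2^{v_2(b)+1}$; hence $\ell_\infty(u_b b-u_r r)\le w_{n-1}-2^{v_2(b)+1}$.

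The real work is the $\ell_1$ estimate. I would set $s:=\ell_\infty(r)-m(b)-m(r)\ge 0$, which is a nonnegative multiple of $2^{v_2(b)}$ (both $\ell_\infty(r)$ and $m(r)$ are multiples of $2^{v_2(r)}$, and $m(b)$ is a multiple of $2^{v_2(b)}$), and observe that $\ell_1(u_b b-u_r r)=\ell_\infty(b)-s$, while $\ell_\infty(b)\le w_n-2^{v_2(b)+1}$ by Corollary~\ref{coro_fcn_bounds}. If $s>0$ then $s\ge 2^{v_2(b)}$, so $\ell_1(u_b b-u_r r)\le\ell_\infty(b)-2^{v_2(b)}\le w_n-3\cdot 2^{v_2(b)}$. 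If $s=0$, i.e.\ $m(b)=\ell_\infty(r)-m(r)$, then $m(b)$ is a multiple of $2^{v_2(r)}$, so the coordinate of $b$ of smaller absolute value has $2$-valuation strictly above $v_2(b)$ (this covers $m(b)=0$ too); since $v_2\big(\gcd(\Re b,\Im b)\big)=v_2(b)$, the coordinate of larger absolute value must realize $v_2(b)$, so $\ell_\infty(b)$ is an \emph{odd} multiple of $2^{v_2(b)}$. Combined with $\ell_\infty(b)\le w_n-2^{v_2(b)+1}$ and $2^{v_2(b)+1}\mid w_n$ — so that $w_n-2^{v_2(b)+1}$ is an even multiple of $2^{v_2(b)}$ — this forces $\ell_\infty(b)\le w_n-3\cdot 2^{v_2(b)}$, and here $\ell_1(u_b b-u_r r)=\ell_\infty(b)$. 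In both branches $\ell_1(u_b b-u_r r)\le w_n-3\cdot 2^{v_2(b)}$.

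With both estimates in hand and $v_2(u_b b-u_r r)=v_2(b)$, Corollary~\ref{coro_fcn_bounds} gives $\phi_{\Z[i]}(u_b b-u_r r)\le n-1<n=\phi_{\Z[i]}(b)$, which is the claim. I expect the main obstacle to be precisely the equality branch $s=0$ (that is, $m(b)+m(r)=\ell_\infty(r)$): there the crude bound only yields $\ell_1(u_b b-u_r r)=\ell_\infty(b)\le w_n-2^{v_2(b)+1}$, and squeezing out the extra $2^{v_2(b)}$ forces one to invoke the $2$-adic parity of $b$, which is available only because $v_2(r)>v_2(b)$ and $2^{v_2(b)+1}\mid w_n$.
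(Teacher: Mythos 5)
Your proposal is correct and follows essentially the same route as the paper: the same candidate remainder $u_b b - u_r r$, the same $\ell_{\infty}$ and $\ell_1$ bounds via Lemma \ref{twisted} and Corollary \ref{coro_big_diff}, and the same $2$-adic parity argument on $\ell_{\infty}(b)$ to extract the extra $2^{v_2(b)}$ in the boundary case $\ell_{\infty}(r)=m(b)+m(r)$. Your write-up is in fact slightly more explicit than the paper's about why $\ell_{\infty}(r)-m(b)-m(r)$, when positive, must be at least $2^{v_2(b)}$.
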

\begin{proof} 
Lemmas \ref{2_val_lemma} and \ref{lemma_2_val_nmid} show $v_2(r) > v_2(b)$
and $2^{v_2(b) +1}$ divides $w_m$ for all $m \geq n -1$. 
%Since $2^{v_2(b) +1}$ divides $w_n - 2^{v_2(b) +1}$,  
Lemma \ref{twisted} and Corollary \ref{coro_big_diff} then tell us that  
\begin{align*}
\ell_{\infty}(u_b b - u_r r)&\leq \max (\ell_{\infty}(b) - \ell_{\infty}(r), m(b) + m(r))\\
&\leq \max(w_n - 2^{v_2(b) +1} - (w_n - w_{n-1}), \ell_{\infty}(r))\\
&= w_{n-1} -2^{v_2(b) +1}\\
\intertext{and} 
\ell_1(u_b b - u_r r)&=\ell_{\infty}(b) - \ell_{\infty}(r) + m(b) + m(r)\\
&\leq \ell_{\infty}(b) \leq w_n -2^{v_2(b) +1}.\\
\intertext{If $\ell_{\infty}(r) = m(r) + m(b)$, then $2^{v_2(b)+1} | m(b)$ and $2^{v_2(b)+1} \nmid \ell_{\infty}(b)$.
We deduce that either $\ell_{\infty}(b) < w_n - 2^{v_2(b)+1}$ or $m(b) + m(r) < \ell_{\infty}(r)$, forcing }
\ell_1(u_b b - u_r r)&\leq w_n -3\cdot2^{v_2(b) } .
\end{align*}
We conclude
$\phi_{\Z[i]} \left ( r - \frac{u_b}{u_r} b \right ) = \phi_{\Z[i]}(u_b b - u_r r) < \phi_{\Z[i]}(b)$. 
\end{proof}

We have found alternate remainders when $\ell_{\infty}(r) \leq m(b)$ and when $\ell_{\infty}(r) \geq m(b) +m(r)$,
so we now examine when $m(b) + m(r) > \ell_{\infty}(r) > m(b)$. 

\begin{lemma}\label{ell+m_leq} Suppose $a,b \in \Z[i] \setminus \{0 \}$ have Gauss remainder $r \neq 0$.  
If $\Im(u_b b) \Im (u_r r) <0$, if$\phi_{\Z[i]}(r) \geq \phi_{\Z[i]}(b) =n$, if $m(b) < \ell_{\infty}(r) < m(b) + m(r)$,
and if $\ell_{\infty}(b) - m(r) \leq w_{n-1} - 2^{v_2(b) +1}$, then 
$\phi_{\Z[i]} \left ( r - \frac{i u_b}{s(r) u_r} b \right ) < \phi_{\Z[i]} (b)$. 
\end{lemma}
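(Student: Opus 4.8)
The plan is to take the proposed alternate remainder $R = r - \frac{iu_b}{s(r)u_r}b$ and study it through the auxiliary element $W = u_b b + s(r)iu_r r$. The hypothesis $m(b) < \ell_\infty(r) < m(b)+m(r)$ forces $0 < \ell_\infty(r) - m(b) < m(r)$, so in particular $m(r) > 0$ and $s(r) = \pm 1$ is well-defined; then $\tfrac{1}{s(r)} = s(r)$ and a direct computation gives $u_r R = -s(r)i\,W$. Since $-s(r)i \in \{i,-i\}$ is a unit, $\ell_1(R) = \ell_1(W)$, $\ell_\infty(R) = \ell_\infty(W)$, $v_2(R) = v_2(W)$, and $\phi_{\Z[i]}(R) = \phi_{\Z[i]}(W)$. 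Because $\phi_{\Z[i]}(r) \geq \phi_{\Z[i]}(b)$, Lemma \ref{2_val_lemma} gives $v_2(r) > v_2(b) =: j$, so $v_2(W) = j$; and since $m(r) \neq 0$, Lemma \ref{lemma_2_val_nmid} yields $2^{v_2(r)} \mid w_{n-1}$, hence $2^{j+1} \mid w_{n-1}$, which places us in the regime where Corollary \ref{coro_big_diff} and Lemma \ref{lemma_remainder_properties} apply. Finally, Lemma \ref{twisted} tells us $W \in \{(\ell_\infty(b) - m(r)) \pm (m(b) - \ell_\infty(r))i\}$.

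Next I would identify the absolute values of the two coordinates of $W$. The imaginary coordinate has absolute value $\ell_\infty(r) - m(b)$ straight from the hypothesis $\ell_\infty(r) > m(b)$. For the real coordinate, Corollary \ref{coro_strictly_less} gives $m(r) \leq \ell_\infty(r) < \ell_1(b)/2 \leq \ell_\infty(b)$, so $\ell_\infty(b) - m(r) > 0$ and the real coordinate has absolute value $\ell_\infty(b) - m(r)$. Bounding the sup-norm, $\ell_\infty(W) = \max\bigl(\ell_\infty(b) - m(r),\ \ell_\infty(r) - m(b)\bigr)$; the first term is $\leq w_{n-1} - 2^{v_2(b)+1}$ by the standing hypothesis $\ell_\infty(b) - m(r) \leq w_{n-1} - 2^{v_2(b)+1}$, and the second satisfies $\ell_\infty(r) - m(b) < m(r) \leq w_{n-1} - 2^{j+1}$ using the hypothesis $\ell_\infty(r) < m(b)+m(r)$ and Corollary \ref{coro_big_diff}. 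Hence $\ell_\infty(W) \leq w_{n-1} - 2^{v_2(W)+1}$.

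For the $\ell_1$-norm, both coordinate absolute values are nonnegative, so $\ell_1(W) = (\ell_\infty(b) - m(r)) + (\ell_\infty(r) - m(b))$; since $\ell_\infty(r) - m(b) < m(r)$, this gives the strict bound $\ell_1(W) < \ell_\infty(b) \leq w_n - 2^{j+1}$. Now I invoke divisibility: $v_2(W) = j$ means $2^j$ divides both $\Re(W)$ and $\Im(W)$, so $2^j \mid \ell_1(W)$; and $2^j \mid w_n$ since Lemma \ref{lemma_remainder_properties} gives $2^{v_2(r)}\mid w_n$ with $j \leq v_2(r)$. A strict inequality between two multiples of $2^j$ improves to $\ell_1(W) \leq (w_n - 2^{j+1}) - 2^j = w_n - 3\cdot 2^{v_2(W)}$. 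With $\ell_\infty(W) \leq w_{n-1} - 2^{v_2(W)+1}$ and $\ell_1(W) \leq w_n - 3\cdot 2^{v_2(W)}$ in hand, Corollary \ref{coro_fcn_bounds} (applied with index $n-1$) gives $\phi_{\Z[i]}(W) \leq n-1 < n$, and since $\phi_{\Z[i]}(R) = \phi_{\Z[i]}(W)$, the lemma follows.

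\textbf{Anticipated obstacle.} Every estimate above is essentially bookkeeping except the very last one: the naive bound only yields $\ell_1(W) < \ell_\infty(b) \leq w_n - 2^{v_2(b)+1}$, which falls short of the required $w_n - 3\cdot 2^{v_2(b)}$ by exactly $2^{v_2(b)}$. Closing that gap is precisely the job of the parity argument ($v_2(W) = v_2(b)$ together with $2^{v_2(b)} \mid w_n$), mirroring the recurring "$-3\cdot 2^{j}$ versus $-2^{j+1}$" phenomenon in the earlier lemmas. A secondary point that must be handled before any norm computation is that the region $\ell_\infty(b) < m(r)$ — in which the real coordinate of $W$ would change sign — cannot occur; this is exactly why the bound $\ell_\infty(r) < \ell_1(b)/2$ from Corollary \ref{coro_strictly_less} needs to be brought in at the outset.
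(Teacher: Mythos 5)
Your proposal is correct and follows essentially the same route as the paper: it works with the auxiliary element $u_b b + s(r) i u_r r$, uses Lemma \ref{twisted} and Corollary \ref{coro_big_diff} to bound $\ell_\infty$ and $\ell_1$, and closes the final gap of $2^{v_2(b)}$ in the $\ell_1$ bound by the same divisibility/parity argument ($2^{v_2(b)}$ divides both $\ell_1$ of the new remainder and $w_n$). The only additions are explicit verifications the paper leaves implicit, such as the sign of $\ell_\infty(b) - m(r)$ via Corollary \ref{coro_strictly_less}.
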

\begin{proof} 
Lemma \ref{2_val_lemma} shows $v_2(r) > v_2(b)=v_2(u_b b + s(r)i u_r r)$ and,
by Lemma \ref{twisted} and Corollary \ref{coro_big_diff}, we see 
\begin{align*}
\ell_{\infty}(u_b b + s(r) i u_r r) &\leq \max (\ell_{\infty}(b) - m(r), \ell_{\infty}(r))
%&\leq \max(w_{n-1} - 2^{v_2(b) +1}, w_{n-1} - 2^{v_2(r)})\\
\leq  w_{n-1} - 2^{v_2(b) +1}\\
\intertext{and}
\ell_1(u_b b + s(r) i u_r r) &= \ell_{\infty}(b) + \left ( \ell_{\infty}(r) - (m(b) + m(r)) \right )
< w_n - 2^{v_2(b) +1}.
\intertext{Lemmas \ref{2_val_lemma} and \ref{lemma_2_val_nmid} show $2^{v_2(b)} | w_n$, so since $2^{v_2(b)} |\ell_1(u_b b + s(r) i u_r r)$,}
\ell_1(u_b b + s(r) i u_r r) &\leq w_n - 3\cdot 2^{v_2(b)}.
\end{align*}
We therefore see that 
$\phi_{\Z[i]}\left ( r - \frac{i u_b}{s(r) u_r} b \right ) = \phi_{\Z[i]} (u_b b +s(r) i u_r r) < \phi_{\Z[i]}(b)$.
\end{proof}

\begin{lemma}\label{ell+m>} Suppose $a,b \in \Z[i] \setminus \{0\}$ have non-zero Gauss remainder $r$.
If $\phi_{\Z[i]}(r) \geq \phi_{\Z[i]}(b) =n$, if $\Im(u_b b) \Im (u_r r) <0$, if $m(b) < \ell_{\infty}(r) < m(r) + m(b)$, 
and if $\ell_{\infty}(b) - m(r) > w_{n-1} - 2^{v_2(b) +1}$, then
$\phi_{\Z[i]} \left (r - \frac{u_b}{u_r} b \right ) < \phi_{\Z[i]}(b)$.
\end{lemma}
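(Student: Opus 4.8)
The plan is to establish the bound $\phi_{\Z[i]}(u_b b - u_r r) \le n-1$ and then transfer it to $r - \tfrac{u_b}{u_r}b$, which is a unit multiple of $u_b b - u_r r$. First I would run the now-standard opening moves: since $\phi_{\Z[i]}(r)\ge\phi_{\Z[i]}(b)$, Lemma \ref{2_val_lemma} gives $v_2(r)>v_2(b)$; the hypotheses $m(b)<\ell_\infty(r)$ and $\ell_\infty(r)<m(b)+m(r)$ force $m(r)>0$, so Lemma \ref{lemma_2_val_nmid} rules out $2^{v_2(r)}\nmid w_{n-1}$, whence $2^{v_2(r)}\mid w_m$ for every $m\ge n-1$ and the dichotomy of Lemma \ref{lemma_remainder_properties} is available. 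The key new input is that the hypothesis $\ell_\infty(b)-m(r)>w_{n-1}-2^{v_2(b)+1}$ together with $\ell_\infty(b)\le w_n-2^{v_2(b)+1}$ gives $m(r)<w_n-w_{n-1}$; combined with Lemma \ref{lemma_remainder_properties} (whose statement says that $\ell_\infty(r)\ne w_{n-1}-2^{v_2(r)}$ would force $m(r)\ge w_n-w_{n-1}$), this pins down $\ell_\infty(r)=w_{n-1}-2^{v_2(r)}$. Since $m(r)$ is a positive multiple of $2^{v_2(r)}$, I also record $m(r)\ge 2^{v_2(r)}$.

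Next I would apply Lemma \ref{twisted}: because $\Im(u_b b)\Im(u_r r)<0$, $u_b b - u_r r\in\{(\ell_\infty(b)-\ell_\infty(r))\pm(m(b)+m(r))i\}$, and Corollary \ref{coro_strictly_less} guarantees the real coordinate $\ell_\infty(b)-\ell_\infty(r)$ is positive. So I need $\ell_\infty(u_b b - u_r r)=\max(\ell_\infty(b)-\ell_\infty(r),\,m(b)+m(r))\le w_{n-1}-2^{v_2(b)+1}$ and $\ell_1(u_b b - u_r r)=(\ell_\infty(b)-\ell_\infty(r))+(m(b)+m(r))\le w_n-3\cdot2^{v_2(b)}$. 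For the first estimate, $\ell_\infty(b)-\ell_\infty(r)\le w_{n-1}-2^{v_2(b)+1}$ is Corollary \ref{coro_big_diff}; for $m(b)+m(r)$ I would combine $m(r)<\ell_\infty(b)-w_{n-1}+2^{v_2(b)+1}$ with $\ell_1(b)=\ell_\infty(b)+m(b)\le w_{n+1}-3\cdot2^{v_2(b)}=2w_{n-1}-3\cdot2^{v_2(b)}$ to get $m(b)+m(r)<w_{n-1}-2^{v_2(b)}$, then push this down to $m(b)+m(r)\le w_{n-1}-2^{v_2(b)+1}$ using that $m(b)$ is a multiple of $2^{v_2(b)}$, $m(r)$ a multiple of $2^{v_2(b)+1}$, and $w_{n-1}$ a multiple of $2^{v_2(b)+1}$. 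For the $\ell_1$ bound I would rewrite $\ell_1(u_b b - u_r r)=\ell_1(b)+m(r)-\ell_\infty(r)=\ell_1(b)+m(r)+2^{v_2(r)}-w_{n-1}\le w_{n-1}-3\cdot2^{v_2(b)}+(m(r)+2^{v_2(r)})$ and then use $m(r)+2^{v_2(r)}\le w_n-w_{n-1}$, which holds because $m(r)$ is a positive multiple of $2^{v_2(r)}$ lying strictly below $w_n-w_{n-1}$, the latter itself being a multiple of $2^{v_2(r)}$ (both $w_n$ and $w_{n-1}$ being divisible by $2^{v_2(r)}$).

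Finally, since $v_2(r)>v_2(b)$ we have $v_2(u_b b - u_r r)=v_2(b)$, so Corollary \ref{coro_fcn_bounds} applied at index $n-1$ yields $\phi_{\Z[i]}(u_b b - u_r r)\le n-1<n=\phi_{\Z[i]}(b)$; as $r-\tfrac{u_b}{u_r}b=-u_r^{-1}(u_b b - u_r r)$ is a unit multiple of $u_b b - u_r r$, the claim follows. The main obstacle is the pair of size estimates in the second step: the new hypothesis constrains $m(r)$ only through the single inequality $\ell_\infty(b)-m(r)>w_{n-1}-2^{v_2(b)+1}$, so forcing $m(b)+m(r)$ and $m(r)+2^{v_2(r)}$ into the exact windows demanded by Corollary \ref{coro_fcn_bounds} rests on careful bookkeeping of the various powers of $2$ dividing $m(b)$, $m(r)$, $w_{n-1}$ and $w_n$, not on the inequalities by themselves.
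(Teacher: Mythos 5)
Your proposal is correct and follows essentially the same route as the paper: use the hypothesis $\ell_{\infty}(b)-m(r)>w_{n-1}-2^{v_2(b)+1}$ together with $\ell_{\infty}(b)\leq w_n-2^{v_2(b)+1}$ and Lemma \ref{lemma_remainder_properties} to pin down $\ell_{\infty}(r)=w_{n-1}-2^{v_2(r)}$ and $m(r)\leq w_n-w_{n-1}-2^{v_2(r)}$, then bound $\ell_{\infty}$ and $\ell_1$ of $u_b b-u_r r$ via Lemma \ref{twisted} and conclude with Corollary \ref{coro_fcn_bounds}. The only differences are cosmetic rearrangements of the same inequalities (e.g., citing Corollary \ref{coro_big_diff} where the paper re-derives the bound, and writing $\ell_1(b)+m(r)-\ell_{\infty}(r)$ instead of substituting the two values separately).
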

\begin{proof}
For the last time, Lemmas \ref{2_val_lemma} and \ref{lemma_2_val_nmid} show that
$2^{v_2(r)} | w_m$ for all $m \geq n-1$ and
 $v_2(r) > v_{2}(b) =v_2(u_b b - u_r r)$.
Note that 
\begin{equation}\label{long_equation}
\ell_{\infty}(b) - (w_n - w_{n-1}) \leq (w_n -2^{v_2(b) +1}) -(w_n - w_{n-1}) = w_{n-1} -2^{v_2(b) +1},
\end{equation}
so our assumptions imply % $m(r) < w_n - w_{n-1}$. 
%Because $2^{v_2(r)}$ divides both $w_n$ and $w_{n-1}$,}
$m(r) \leq w_n - w_{n-1} - 2^{v_2(r)}.$
Lemma \ref{lemma_remainder_properties} therefore tells us $\ell_{\infty}(r) = w_{n-1} - 2^{v_2(r)}$.
Furthermore,
\begin{align*}
%m(r) - \ell_{\infty}(b) &< 2^{v_{b} +1} - w_{n-1},\\
%\intertext{so}
m(r) - \ell_{\infty}(b) + \ell_1(b) &<2^{v_2(b) +1} - w_{n-1} + w_{n+1} - 3 \cdot 2^{v_2(b)}\\
\intertext{and thus}
%m(r) + m(b) &<w_{n-1} - 2^{v_2(b)}.\\
%\intertext{Note that since $2^{v_2(b)}$ divides $2^{v_2(r)}$, which itself divides $w_{n-1}$,}
m(r) + m(b) &\leq w_{n-1} - 2^{v_2(b)+1}.\\
\intertext{Together, Lemma \ref{twisted}, Lemma \ref{lemma_remainder_properties}, and Equation \ref{long_equation} tell us that}
\ell_{\infty}(u_b b - u_r r) &\leq \max (\ell_{\infty}(b) - \ell_{\infty}(r), m(b) + m(r)) \leq w_{n-1} - 2^{v_2(b) +1}\\
\intertext{and}
\ell_1(u_b b - u_r r) &= \ell_{\infty}(b) + m(b) - \ell_{\infty}(r) + m(r) \\
&\leq w_{n+1} - 3 \cdot 2^{v_2(b)} - (w_{n-1} - 2^{v_2(r)}) + (w_n - w_{n-1} - 2^{v_2(r)})\\
&= w_n - 3 \cdot 2^{v_2(b)}.
\end{align*}
We conclude that 
$\phi_{\Z[i]} \left ( r - \frac{u_b}{u_r} b \right ) = \phi_{\Z[i]} (u_b b - u_r r) < \phi_{\Z[i]}(b)$.
\end{proof}

We now put all our lemmas together to prove Theorem \ref{main_result}.
\begin{proof}
Proposition \ref{im_align} proves our claim when condition 1 holds, Lemma \ref{m(r)+m(b)_leq} proves it when condition 2 holds, and Lemma \ref{ell+m>} proves it 
when condition 3 holds.

If $\Im(u_b b) \Im(u_r r) <0$ and neither condition 2 nor condition 3 hold, then either $m(b) \geq \ell_{\infty}(r)$ or 
$m(b) < \ell_{\infty}(r) < m(b) +m(r)$ and $\ell_{\infty}(b) - m(r) \leq w_{n-1} -2^{v_2(b) +1}$.
Lemmas \ref{m(b)_leq} proves our theorem in the first situation, and Lemma \ref{ell+m_leq} proves it in the second.
\end{proof}

\section{Acknowledgements}

I thank my family for their patience with me when I had the proof's necessary insight on our vacation, and my colleagues Jon Grantham and Tad White for our fruitful discussions.

\end{document}